\newcommand{\cU}{\mathscr{U}} 
\newcommand{\cK}{\mathscr{K}} 
\newcommand{\cC}{\mathscr{C}}
\newcommand{\Dec}{\mathrm{Dec}}
\newcommand{\sk}{\mathrm{sk}}
\newcommand{\cosk}{\mathrm{cosk}}
\newcommand{\Set}{\mathit{Set}}
\newcommand{\sSet}{\mathit{sSet}}
\newcommand{\cKB}{\mathscr{K}_{/B}}
\newcommand{\Map}{\mathrm{Map}} 
\newcommand{\supp}{\mathrm{supp}} 
\newcommand{\Aut}{\mathrm{Aut}}
\newcommand{\ZZ}{\mathbb{Z}} 
\DeclareMathOperator*{\colim}{\mathrm{colim}}
\theoremstyle{plain}
\newtheorem{theorem}{Theorem}
\newtheorem{lemma}[theorem]{Lemma}
\newtheorem{proposition}[theorem]{Proposition}
\newtheorem{corollary}[theorem]{Corollary}
\theoremstyle{plain}
\theoremstyle{definition}
\newtheorem{definition}[theorem]{Definition}
\theoremstyle{remark}
\newcommand*\samethanks[1][\value{footnote}]{\footnotemark[#1]}
\title{Simplicial principal bundles in parametrized spaces} 
\author{
David M.\ Roberts\thanks{Supported by the Australian Research Council 
(grant number DP120100106)}\\
\href{mailto:david.roberts@adelaide.edu.au}{\texttt{david.roberts@adelaide.edu.au}}\\ 
School of Mathematical Sciences\\ 
The University of Adelaide\\ 
Adelaide SA 5005 \\ 
Australia  
\and 
Danny Stevenson\samethanks[1] \thanks{Supported by the 
 Engineering and Physical Sciences Research Council (grant number EP/I010610/1)} \\
 \href{mailto:daniel.stevenson@adelaide.edu.au}{\texttt{daniel.stevenson@adelaide.edu.au}}\\
School of Mathematical Sciences\\ 
The University of Adelaide\\ 
Adelaide SA 5005 \\ 
Australia 
}
\begin{document} 

\maketitle

\begin{abstract}

	In this paper we study the classifying 
	theory of principal bundles in the parametrized setting, 
	motivated by recent interest in higher gauge theory.  Using simplicial techniques, we construct a product-preserving 
	classifying space functor for groups in the category of spaces over a fixed space $B$.  
	Additionally, we prove that the fiberwise geometric realization functor sends a large class 
	of simplicial parametrized principal bundles to ordinary parametrized principal bundles.  
	 As an  application we show that the fiberwise geometric
	realization of the universal simplicial principal bundle for a simplicial
	group $G$ in the category of spaces over $B$ gives rise  to a parametrized 
	principal bundle with structure group $|G|$.

	2010 \emph{Mathematics Subject Classification} 55R35, 55U10.

\end{abstract}

\tableofcontents

\section{Introduction}

The construction of a classifying space for a topological group is
conveniently done using simplicial techniques, namely via the geometric realization of a 
certain simplicial space.  Similarly a model for the
universal principal bundle can be constructed as the geometric realization
of a certain simplicial  principal bundle.  The utility of these constructions rests in part 
on the fact that the classifying space functor so obtained is product-preserving.  The aim of this paper is to extend these 
constructions to the parametrized setting of \cite{MaySig}, in which the category of 
topological spaces is replaced by a suitable category of spaces over a fixed 
space $B$.  

We recall the setting for parametrized homotopy theory from \cite{MaySig}. Let 
$\cK$ denote the category of $k$-spaces \cite{Vogt}  and let $\cU$
denote the full subcategory of compactly generated spaces (i.e.\ weakly
Hausdorff $k$-spaces).   Let $B$ be an object of $\cU$ which will remain fixed 
throughout the paper.  We will work 
in the category $\cKB$ of spaces over $B$; an object of $\cKB$ is thus a space 
$X$ together with a map $X\to B$ (the {\em structure map}), while a morphism 
is a map of the underlying spaces which is compatible with the structure maps.  There is a 
natural homotopy theory associated to the category $\cKB$, this is described 
by the $f$-model structure of May and Sigurdsson, recalled in Theorem~\ref{may sig model thm} 
below.    

We will be interested in groups in the category $\cKB$.  In fact there are four notions of 
(internal) group that will play a role in this paper: groups in the category $\cK$ (which we will refer to 
as {\em groups}), groups in the category $s\cK$ of simplicial objects in $\cK$ (which we 
will refer to as {\em simplicial groups}), groups in the category $\cKB$ (which we will refer to 
as {\em parametrized groups}) and groups in the category $s\cKB$ (which we will refer to 
as {\em simplicial parametrized groups}).  In each case, it should be clear from the ambient 
category with respect to which we are working internal to, which of these labels for group objects applies.  
For each of the four notions of group, we have a corresponding notion of principal 
bundle.  For example, we have parametrized 
principal bundles (Definition~\ref{def:fiberwise bundle}) 
and simplicial parametrized principal bundles 
(Definition~\ref{def:simp param principal bundle}).  

The main result of this paper is the construction of a product-preserving classifying 
space functor for parametrized groups, together 
with a corresponding classification theorem for parametrized principal 
bundles.  If $G$ is a parametrized group, we will denote by $BG$ the fiberwise geometric 
realization of the standard simplicial model (see \cite{May-Classifying, Segal-IHES}; 
see also the description following Definition~\ref{def:classifying complex}) 
for the classifying space of $G$.  Our main result states 
that $BG$ is a classifying space for parametrized principal $G$-bundles.  One advantage of this 
result over previous constructions of classifying spaces in the parametrized setting (see for instance 
\cite{Crabb-James}) is that the classifying space functor $B(-)$ so defined is product-preserving.

\begin{restatable}{theorem}{classifyingthm}
\label{classifying thy}

	Let $M$ be a paracompact space over $B$ and let $G$ be a well-sectioned
	fibrant parametrized group.  Then there is a bijection
	\[ 
		H^1(M,G)_{\cKB}\simeq [M,BG]_{\cKB}.  
	\]

\end{restatable} 

Here if $X$ and $Y$ are spaces over $B$, we 
denote by $[X,Y]_{\cKB}$ the set of fiberwise homotopy 
classes of maps from $X$ to $Y$ 
(see Section~\ref{par model str}), 
and $H^1(M,G)_{\cKB}$ denotes the set of 
isomorphism classes of parametrized principal 
$G$-bundles on $M$ (see Section~\ref{sec: princ bundles}).    
We now explain the hypotheses in the theorem above.  
The assumption that $G$ is well-sectioned 
(Definition~\ref{def:well-sectioned group}) 
is the parametrized analog of the 
notion of well-pointed group, which is a standard 
hypothesis to impose in the analogous construction 
of a classifying space for a group in the topological setting.  
One new feature here is that we must also 
impose a fibrancy condition on our parametrized 
groups; namely we must require that they are fibrant 
objects in the model structure of 
Theorem~\ref{may sig model thm}, and we then refer to fibrant 
parametrized groups.  
This is necessary so that, among other things, the projection 
maps of principal bundles are fibrations.


There is a fiberwise geometric realization functor $|-|\colon s\cKB\to \cKB$ sending simplicial parametrized spaces to 
parametrized spaces.  We shall see, in Lemma~\ref{lem:geom realizn of gp object}, that in analogy 
with the corresponding results for ordinary geometric realization, the fiberwise geometric 
realization of a simplicial parametrized group $G$ is a parametrized group $|G|$.  
More generally, we shall prove the following technical theorem    
which asserts that fiberwise geometric realization sends a 
large class of simplicial parametrized principal bundles to ordinary parametrized principal bundles; this 
theorem is a key ingredient in the proof of Theorem~\ref{classifying thy}.  

\begin{restatable}{theorem}{mainresult}
\label{main result}

	Let $G$ be a fibrant simplicial parametrized group and let $M$ be a proper
	simplicial object in $\cKB$.   If $P$ is a simplicial principal bundle
	over $M$  with structure group $G$  such that $P_n\to M_n$ is a
	numerable,  parametrized principal $G_n$-bundle in $\cKB$ for all $n\geq 0$, then the induced
	map
	\[ 
		|P|\to |M| 
	\] 
	on  fiberwise geometric realizations is the projection map for a locally trivial parametrized
	principal $|G|$-bundle $|P|(|M|,|G|)$ in $\cKB$.  Moreover, if the bundle $P_n\to M_n$ is trivial 
	for all $n\geq 0$, then $|P|\to |M|$ is numerable. 

\end{restatable}

Here by a fibrant \emph{simplicial} parametrized group, we mean one for which the parametrized 
groups of $n$-simplices are fibrant for all $n\geq 0$. 
By a {\em proper} simplicial object in $\cKB$ we just mean 
the obvious generalization of the classical notion of 
proper simplicial space \cite{May-GILS} to the parametrized 
setting (see Definition~\ref{def:proper simplicial obj}).  
A standard argument (see Appendix~\ref{app:good implies proper}) 
shows that every {\em good} simplicial object in 
$\cKB$, i.e.\ one whose degeneracy maps are cofibrations in the 
$f$-model structure, is 
automatically proper.  

A prime example of a simplicial principal bundle is the classical notion of 
\emph{principal twisted cartesian product} internal to the category $\cKB$ of parametrized spaces 
(see Section~\ref{sec: simplicial bundles}).  In particular, if $G$ is a simplicial parametrized group then 
we may consider the universal principal twisted cartesian product $WG\to \overline{W}G$.  
The following proposition gives a criterion on $G$ which ensures that $\overline{W}G$ 
is proper and hence satisfies the hypotheses of Theorem~\ref{main result}.   
 
 \begin{restatable}{proposition}{wellpointedprop}
\label{well pointed simp grp} 

	Let $G$ be a well-sectioned simplicial parametrized group.  Then the following statements are true: 
	\begin{enumerate} 
	\item $G$ is a good simplicial group in $\cKB$.
	\item $\overline{W}G$ is proper in $s\cKB$.
	\item $|G|$ is a well-sectioned group in $\cKB$.
	\end{enumerate} 

\end{restatable}

Using Proposition~\ref{well pointed simp grp} we can easily establish that the hypotheses of 
Theorem~\ref{main result} are met for the universal principal twisted cartesian product 
$WG\to \overline{W}G$.  Hence we obtain the following result.   

\begin{restatable}{proposition}{WGcorr} 
\label{corr}

	Let $G$ be a well-sectioned fibrant simplicial parametrized group.  Then the 
	fiberwise geometric realization $|WG|\to |\overline{W}G|$ of the 
	universal $G$-bundle $WG\to \overline{W}G$ is a numerable parametrized principal $|G|$ bundle.  
	Moreover $|WG|$ is a fiberwise contractible group in $\cKB$ containing $|G|$ as a closed subgroup.  

\end{restatable}

Our motivation comes from recent interest in higher principal  bundles or
gerbes \cite{Bartels, JL, Murray,Roberts_PhD, Schreiber_Habilitation, Ste,
Wockel}.   
Recall that for a paracompact space $M$, there is a bijection between
$H^3(M,\ZZ)$ and the set of equivalence classes of $S^1$-\emph{bundle gerbes}
on $M$. An $S^1$-bundle gerbe on $M$ is, roughly speaking, a principal bundle
on $M$ where the structure group $S^1$ is replaced by the 
simplicial topological group $\overline{W}S^1$. From another point of view,
$H^3(M,\ZZ)$ parametrizes the set of isomorphism classes of principal $K(\ZZ,2)$
bundles on $M$. The process of passing from a simplicial principal bundle
for $\overline{W}S^1$ to a principal $K(\ZZ,2)$ bundle can be viewed as an instance of 
our Theorem~\ref{main result} (recall the geometric realisation of $\overline{W}S^1$ is a $K(\ZZ,2)$).
  
Our interest lies in a generalization of this, namely when  the simplicial
group $\overline{W}S^1$ is replaced by an arbitrary simplicial parametrized topological group $G$ 
(subject to quite minor topological conditions) and
we consider simplicial principal bundles with structure group $G$ on $M$.
The resulting set of equivalence classes is isomorphic to the {\em non-abelian
cohomology set} $H^1(M,G)$.  In this case the process of geometric realization
produces  an ordinary principal $|G|$ bundle from a simplicial principal $G$
bundle and therefore  gives rise to a map $H^1(M,G)\to H^1(M,|G|)$.  In
\cite{Ste2}, based on the results of this  paper, the second author proves that this map is an
isomorphism provided that $M$ is paracompact and  $G$ satisfies some mild
topological conditions.

In outline then this paper is as follows.  In Section~\ref{par model str} we review the 
homotopy theory of parametrized spaces from \cite{MaySig}.  In Section~\ref{sec:param_gp} 
we specialize our discussion to parametrized groups and we follow this in 
Section~\ref{sec: princ bundles} with a discussion of principal bundles in the parametrized setting.  
In Section~\ref{sec: simplicial bundles} we consider simplicial parametrized bundles 
and in particular the classical notion of principal twisted cartesian product.  Section~\ref{sec:geom_realzn} 
contains the detailed statements of our main results and the proof of Theorem~\ref{classifying thy}.  
Sections~\ref{app: proof of main result} and~\ref{app:proof of propn 3} contain the 
proofs of Theorems~\ref{main result} and~\ref{well pointed simp grp} respectively, while 
Appendix~\ref{app:good implies proper} is devoted to 
a discussion of the relation between good and proper simplicial objects.

\subsubsection*{Acknowledgements} 

We thank the referee for their very helpful comments which have greatly improved 
the structure and readability of the paper.  
We would also like to thank another anonymous referee for some very useful comments on an 
earlier version of this paper.  
DS would like to thank Tom Leinster for some useful conversations, and 
Urs Schreiber for many email discussions and encouragement.

\section{Parametrized spaces} 
\label{par model str}

In this section we recall some of the basic notions of parametrized homotopy 
theory from \cite{MaySig}; in particular we recount some of the details of the 
$f$-model structure on the category $\cKB$ of spaces over $B$.  

Recall from \cite{MaySig} that
$\cKB$ is a {\em topological bicomplete} category,  in the sense that $\cKB$
is enriched over $\cK$, the underlying category is complete and  cocomplete,
and that it is tensored and cotensored over  $\cK$. 
For any space $K$ and
space $X$ over $B$ the tensor $X\otimes K$ is  defined to be the space
$X\times K$ in $\cK$, considered as a space over $B$ via the obvious map
$X\times K\to B$.  In the sequel, we will often denote the 
tensor $X\otimes K$ simply as $X\times K$.  Similarly, the cotensor $X^K$ is defined to be the space
$\Map_B(K,X)$ given by the pullback square
\[ 
	\xymatrix{ 
	\Map_B(K,X) \ar[r] \ar[d] & \Map(K,X) \ar[d] \\ 
	B \ar[r] & \Map(K,B) } 
\] 
in $\cK$, where the map $B\to \Map(K,B)$ is the conjugate of the projection $B\times K\to B$.
Recall also (see \cite{MaySig}) that $\cKB$ is cartesian closed under the
fiberwise cartesian product $X\times_B Y$ and the fiberwise mapping space
$\Map_B(X,Y)$ over $B$.   The definition of the fiberwise mapping space
$\Map_B(X,Y)$  is rather subtle and we will not give it here, we instead refer
the reader to Definition~1.3.7 of \cite{MaySig}.  

Since $\cKB$ is a topological bicomplete category 
there is a natural notion of geometric realization for simplicial objects 
in $\cKB$ -- the notion of {\em fiberwise geometric realization}.  
If $X$ is a simplicial object in $\cKB$, i.e.\ a parametrized simplicial space, then the fiberwise
geometric realization $|X|$ of  $X$ is defined by the usual coend formula:
\[ 
	|X| = \int^{[n]\in \Delta} X_n\times \Delta^n.
\]
In other words, one regards $X$ as a simplicial object in $\cK$ and  computes
the ordinary geometric realization, and then  one equips this with the induced
map to $B$.   In particular, $|X|$ is obtained as a quotient from the coproduct 
$\sqcup_{n\geq 0} X_n\times \Delta^n$.  

It follows from the non-parametrized case that fiberwise geometric realization gives rise to a co-continuous functor 
$|\cdot|\colon s\cKB\to \cKB$.  Since ordinary geometric  realization commutes with
finite limits, fiberwise  geometric realization also commutes
with finite limits in $\cKB$, and moreover is compatible with
the topological structures on $s\cKB$ and $\cKB$ in the sense that  $|X\times K| =
|X|\times K$ for any space $K$ in $\cK$.  Note also that the 
fiberwise geometric realization of a level-wise closed inclusion is a 
closed inclusion.

In \cite{MaySig} several model structures on  $\cKB$ are introduced.  The
model structure on $\cKB$ that we will be interested in  has its origins in
the work \cite{SV} of Schw\"{a}nzl and Vogt.  In \cite{SV}   (see also
\cite{Cole} and \cite{MaySig})  the authors consider a topological bicomplete
category $\cC$ and  define three classes of morphisms: $h$-equivalences,
$h$-fibrations and  $\bar{h}$-cofibrations.  A morphism $f\colon X\to Y$ in
$\cC$ is an  $h$-equivalence if and only if it is a homotopy equivalence,
defined in  terms of the cylinder object $X\times I$ where
$I$ denotes the unit interval.  A morphism $f\colon X\to Y$ is called an
$h$-fibration if and only if it has the RLP (right lifting property) with
respect to all  morphisms of the form $Z\times \{0\}\to Z\times I$, while
$f$ is  called an $\bar{h}$-cofibration if and only if  $X\times
I\cup_{X\times \{0\}} Y\times \{0\} \to Y\times I$ has the  LLP (left lifting property) with respect to all
$h$-fibrations in $\cC$.

In \cite{SV} the $\bar{h}$-cofibrations are called {\em strong cofibrations}
and the following alternative characterization of them is given: a morphism
$f\colon X\to Y$ is an  
$\bar{h}$-cofibration if and only if it has the LLP  
with respect to all $h$-fibrations which are also $h$-equivalences -- i.e.\
the $h$-acyclic $h$-fibrations. When $\cC = \cK$, the class of strong
cofibrations equals the class of closed cofibrations. Under suitable
hypotheses on $\cC$ (see Theorem 4.2 of \cite{Cole} and Theorem 4.2.12 of
\cite{MaySig}; see also \cite{Barthel-Riehl}) these three classes of morphisms equip $\cC$ with the structure
of a proper, topological model category. This model structure is sometimes
called the {\em $h$-model structure}.

If we specialize to the case when $\cC = \cKB$, it 
turns out (see \cite{Barthel-Riehl, Cole, MaySig}) that the
required hypotheses are satisfied and the above
notions of $h$-equivalence, $h$-fibration and $\bar{h}$-cofibration  equip
$\cKB$ with the structure of a model category.   This model structure is
called the $f$-model structure (for fiberwise) and the weak equivalences,
fibrations and cofibrations are labelled accordingly.   A precise statement is
the following.

\begin{theorem}[May-Sigurdsson \cite{MaySig}, Theorem 5.2.8] 
\label{may sig model thm}

$\cKB$ has the structure of a proper, topological model category for which 

	\begin{itemize} 

		\item the weak equivalences are the $f$-equivalences, 

		\item the fibrations are the $f$-fibrations, 

		\item the cofibrations are the $\bar{f}$-cofibrations.  

	\end{itemize} 

\end{theorem}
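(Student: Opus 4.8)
The plan is to exhibit this as an instance of a general existence theorem rather than to build the model structure by hand. The setup already records that $\cK/B$ is topological bicomplete, and the three classes of maps have been defined purely in the language of a topological bicomplete category $\cC$. So I would argue that $\cK/B$ satisfies the hypotheses of Cole's theorem (Theorem~4.2 of \cite{Cole}, reformulated as Theorem~4.2.12 of \cite{MaySig}), which asserts precisely that the $h$-equivalences, $h$-fibrations and $\bar h$-cofibrations of such a category assemble into a proper, topological model structure; the theorem then follows upon renaming these classes as the $f$-equivalences, $f$-fibrations and $\bar f$-cofibrations.

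The easy axioms I would dispatch first. Bicompleteness supplies limits and colimits. Two-out-of-three holds because $h$-equivalences are homotopy equivalences for the cylinder $X\otimes I=X\times I$ over $B$, and homotopy equivalences always enjoy it. Closure under retracts is automatic for the two lifting-defined classes and standard for homotopy equivalences. One half of the lifting axiom is built into the Schw\"{a}nzl--Vogt characterization recalled above, namely that $\bar h$-cofibrations are exactly the maps with the RLP against the $h$-acyclic $h$-fibrations; the matching half, that $h$-acyclic $\bar h$-cofibrations lift against all $h$-fibrations, I would extract from the factorizations below.

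The substance is the factorization axiom. Here I would factor each $f\colon X\to Y$ in $\cK/B$ using fiberwise versions of the two standard constructions. The mapping cylinder $(X\otimes I)\cup_{X\otimes 1}Y$, assembled from the tensor with $I$, factors $f$ as an $\bar h$-cofibration followed by an $h$-acyclic $h$-fibration. Dually, the mapping path space $X\times_Y Y^I$, assembled from the cotensor $Y^I=\Map_B(I,Y)$, factors $f$ as an $h$-acyclic $\bar h$-cofibration followed by an $h$-fibration. To upgrade the resulting model structure to a topological one I would verify the pushout--product compatibility between the $\cK$-tensoring and these three classes, measured against the Str\o m model structure on $\cK$. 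Properness I would then read off from the flexibility of homotopy equivalences together with the stability of $h$-fibrations under pullback and of $\bar h$-cofibrations under pushout.

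The delicate point, and the reason the ambient category is $\cK$ rather than the compactly generated $\cU$, is exactly the factorization step. The fiberwise mapping space $\Map_B(I,X)$ need not be weak Hausdorff even when $X$ is, so the cocylinder and the mapping path space only remain inside the category once the weak Hausdorff condition is dropped. The real work is therefore in checking that these parametrized constructions genuinely produce $\bar h$-cofibrations and $h$-fibrations in $\cK/B$ --- and not merely maps whose underlying unparametrized versions have the right properties --- which is where the subtlety of the parametrized setting is concentrated.
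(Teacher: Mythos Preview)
Your approach is essentially what the paper does: the paper does not supply a proof of this theorem at all, but cites it as a result of May--Sigurdsson, noting just beforehand that $\cK/B$ satisfies the hypotheses of the general existence theorem of \cite{Cole} and \cite{MaySig}. Your sketch of how one would verify those hypotheses (mapping cylinder and fiberwise mapping path space factorizations, with the caveat that $\Map_B(I,X)$ forces one out of $\cU$) is an accurate expansion of what the paper leaves implicit, and is consistent with the treatment in \cite{MaySig}.

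One small slip: you write that $\bar h$-cofibrations are characterized by the \emph{RLP} against $h$-acyclic $h$-fibrations; this should of course be the \emph{LLP}. (The paper's own exposition contains the same typo, so you may have inherited it from there.)
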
 

Recall that a model category $\cC$ is said to be {\em topological} if it  is a
$\cK$-model category in the sense of Definition 4.2.18 of \cite{Hovey},  for
the monoidal model structure on $\cK$ given by the above  $h$-model structure
(observe that this coincides with the classical  Str\o m model structure
\cite{Cole,MaySig,Strom3} on $\cK$).

To be completely explicit, we explain the labels on the  three classes of maps
in the above theorem. A map $g\colon X\to Y$ in $\cKB$ is called an $f$-{\em
equivalence} if it is a fiberwise homotopy equivalence.  This needs the notion
of homotopy over $B$, which is formulated in terms of  $X\times I$. A map
$g\colon X\to Y$ in $\cKB$ is called an $f$-{\em fibration}  if it has the
fiberwise covering homotopy property, i.e.\ if it has the  RLP property with
respect to all maps of the form  $i_0\colon Z\to Z\times I$ for all $Z\in
\cKB$. A map $g\colon X\to Y$ in $\cKB$ is called an  $\bar{f}$-{\em
cofibration}, or a {\em strong cofibration}  if it has the LLP property with
respect to all $f$-acyclic $f$-fibrations. There is also the notion of an
$f$-{\em cofibration}: this is a map $g\colon X\to Y$ which satisfies the LLP
property with respect to all maps of the form  $p_0\colon \Map_B(I,Z)\to Z$
for some $Z\in \cKB$. Every $\bar{f}$-cofibration $g\colon X\to Y$ in $\cKB$
is an $f$-cofibration. The converse is not true in general.  However May  and
Sigurdsson prove (see Theorems 4.4.4 and 5.2.8 of \cite{MaySig}) that if
$g\colon X\to Y$ is a \emph{closed} $f$-cofibration then $g$ is an
$\bar{f}$-cofibration.

Moreover, in analogy with the standard characterization of  closed Hurewicz
cofibrations in terms of NDR pairs, May and  Sigurdsson give a criterion (see
Lemma 5.2.4 of \cite{MaySig})  which detects when a closed inclusion $i\colon
A\to X$ in  $\cKB$ is an $\bar{f}$-cofibration.   Such an inclusion $i\colon
A\to X$ in $\cKB$ is an  $\bar{f}$-cofibration if and only if $(X,A)$ is a
{\em fiberwise NDR pair} in the sense that there is a map  $u\colon X\to I$
for which $A = u^{-1}(0)$ and a homotopy $h\colon X\times I\to X$  over $B$
such that $h_0 = \mathit{id}$, $h_t|_A = \mathit{id}_A$  for all $0\leq t\leq
1$ and $h_1(x)\in A$ whenever $u(x) < 1$

\section{Parametrized groups} 
\label{sec:param_gp}

In this section we study the four classes of groups described in the introduction: groups, parametrized groups, simplicial 
groups and simplicial parametrized groups, corresponding to group 
objects in $\cK$, $\cKB$, $s\cK$ and $s\cKB$ respectively.  

As a group object in $\cKB$, a parametrized group has a natural structure as an {\em ex-space}, i.e.\ a space 
$X$ over $B$ equipped with a section of the structure map $X\to B$ (see Section 1.3 of 
\cite{MaySig} for more details).  
For such a parametrized group $G$, the structure as an ex-space arises from the 
canonical section of the structure map of $G$ given by the identity section.  In the context of 
parametrized spaces, ex-spaces are the analog of pointed spaces in the non-parametrized 
setting.  The analog of a well-pointed, or non-degenerately based space, is the notion of a 
well-sectioned ex-space, i.e.\ one for which the distinguished section of the structure map 
is an $\bar{f}$-cofibration.  In particular 
the ex-space analog of a well-pointed group is the notion of a well-sectioned parametrized group 
in the sense of the following definition.  

\begin{definition} 
\label{def:well-sectioned group} 
Let $G$ be a parametrized group.  We say that $G$ is {\em well-sectioned} 
if the identity section $1_G\colon B\to G$ is an $\bar{f}$-cofibration.  We say that 
a simplicial parametrized group is {\em well-sectioned} if the parametrized group of 
$n$-simplices is well-sectioned for every $n\geq 0$.  
\end{definition}

We shall also need to impose a fibrancy condition on parametrized groups.  
Accordingly, we make the following definition.  

\begin{definition} 
\label{def:fibrant param gp} 
Let $G$ be a parametrized group.  We say that $G$ is {\em fibrant}, if $G$ is $f$-fibrant 
considered as an object of $\cKB$.   We say that a simplicial parametrized group is 
{\em fibrant} if it is level-wise fibrant in the sense that $G_n$ is fibrant for all 
$n\geq 0$.  
\end{definition} 

We shall see that in order to obtain a notion of parametrized principal $G$-bundle 
(Definition~\ref{def:fiberwise bundle} below) that is well-behaved homotopically 
in the sense that is a $f$-fibration, then we need to impose the condition that 
$G$ is fibrant (see Theorem~\ref{thm:CJ locally trivial} below).   

Recall from Section~\ref{par model str} above, that the fiberwise geometric realization functor $|\cdot |\colon 
s\cKB\to \cKB$ preserves products.   Hence we have the following 
obvious Lemma.  

\begin{lemma}
\label{lem:geom realizn of gp object} 
The fiberwise geometric realization functor $|-|\colon s\cKB\to \cKB$ sends group 
objects in $s\cKB$ to group objects in $\cKB$, in other words, if $G$ is a simplicial 
parametrized group then $|G|$ is a parametrized group.  
\end{lemma}

If $G$ is a parametrized group then there is a natural notion of a
$G$-space over $B$ and a $G$-map between $G$-spaces over $B$.  A $G$-space
over $B$ is a  space $X$ over $B$ equipped with an action of $G$, i.e.\ a map
$X\times_B G\to X$ of spaces over $B$ making the usual diagrams commute, and a $G$-map 
from $X$ to $Y$ is a map $X\to Y$ in $\cKB$ compatible with the respective 
$G$-actions.  
We write $G\cKB$ for the category consisting of
$G$-equivariant objects and $G$-maps between them.  We have the following
lemma.

\begin{lemma} 
\label{GK/B top bicomplete}

	The category $G\cKB$ is a topological bicomplete category.  

\end{lemma}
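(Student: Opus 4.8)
The plan is to transport the topological bicomplete structure of $\cK/B$, recalled above, along the forgetful functor $U\colon G\cK/B\to\cK/B$. The crucial observation is that a $G$-space over $B$ is precisely an algebra for the monad $T=(-)\times_B G$ on $\cK/B$, whose multiplication and unit are induced by the group multiplication $G\times_B G\to G$ and the identity section $B\to G$, and that a $G$-map is exactly a morphism of $T$-algebras. Thus $G\cK/B=(\cK/B)^T$ is the Eilenberg--Moore category of $T$, and all of the required structure will be produced by lifting the corresponding structure on $\cK/B$ through this identification.

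For completeness and cocompleteness I would argue as follows. Being the forgetful functor of an Eilenberg--Moore category, $U$ creates all limits; since $\cK/B$ is complete this gives completeness of $G\cK/B$, the action on a limit $L=\lim_j X_j$ being the unique map $L\times_B G\to L$ induced by the cone $\{\,L\times_B G\to X_j\times_B G\xrightarrow{a_j}X_j\,\}_j$, where $a_j$ denotes the action on $X_j$. For colimits the key point is that $\cK/B$ is cartesian closed, so $T=(-)\times_B G$ is a left adjoint, with right adjoint $\Map_B(G,-)$, and therefore preserves all colimits; the same holds for $T^2$. Consequently $U$ also creates colimits: a colimit $C=\colim_j X_j$ formed in $\cK/B$ carries a unique action $a_C\colon C\times_B G=\colim_j(X_j\times_B G)\to C$ restricting to $a_j$ on each summand, and the unit and associativity axioms for $a_C$ follow by uniqueness, using that $T$ and $T^2$ commute with the colimit. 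Hence $G\cK/B$ is bicomplete.

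For the enrichment over $\cK$ I would set the hom-object $G\cK/B(X,Y)$ to be the equalizer in $\cK$ of the two maps
\[
\cK/B(X,Y)\rightrightarrows \cK/B(X\times_B G,Y)
\]
given by precomposition with the action $a_X$ and by $a_Y\circ\big((-)\times_B\id_G\big)$; here $\cK/B(-,-)$ denotes the $\cK$-valued hom-object of $\cK/B$. This equalizer exists because $\cK$ is complete, its underlying set is the set of $G$-maps $X\to Y$, and the composition and identities of the $\cK$-enrichment of $\cK/B$ restrict to these equalizers since a composite of $G$-maps is again a $G$-map. This equips $G\cK/B$ with the structure of a $\cK$-enriched category.

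Finally, I would lift the tensor and cotensor of $\cK/B$. For $K\in\cK$ and $X\in G\cK/B$ the tensor $K\otimes X=K\times X$ inherits the action $\id_K\times a_X$ in which $G$ acts only on the $X$-factor, while the cotensor $X^K=\Map_B(K,X)$ inherits the action
\[
\Map_B(K,X)\times_B G\longrightarrow \Map_B(K,X\times_B G)\xrightarrow{\ \Map_B(K,a_X)\ }\Map_B(K,X),
\]
the first arrow being the canonical natural map. The required enriched tensor--hom--cotensor adjunctions are then obtained by restricting the homeomorphisms already available for $\cK/B$ to the equalizer subobjects $G\cK/B(-,-)$, checking that the equivariance conditions correspond under the adjunction bijections. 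I expect this last verification to be the main obstacle: because the fiberwise mapping spaces are built from the subtle pullbacks recalled above and need not be weak Hausdorff, one must confirm not merely that the adjunction bijections carry $G$-maps to $G$-maps, but that they are homeomorphisms between the relevant equalizer subspaces. Granting this, $G\cK/B$ is a topological bicomplete category.
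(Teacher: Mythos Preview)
Your approach is essentially the paper's: limits and colimits are created by the forgetful functor (the paper says this directly rather than invoking Eilenberg--Moore, but uses the same key fact that $G\times_B-$ is a left adjoint), the enriched hom is the identical equalizer, and the tensor and cotensor are defined the same way. Your hedging on the final adjunction check is unwarranted: the verification is purely formal and needs no topology beyond what is already in the $\cK/B$ adjunctions---using $(X\otimes K)\times_B G\cong (X\times_B G)\otimes K$, the $\cK/B$-level homeomorphisms carry the parallel pair defining $G\cK/B(X\otimes K,Y)$ isomorphically to those defining $G\cK/B(X,Y^K)$ and $\cK\bigl(K,\cK/B(X,Y)\bigr)\rightrightarrows\cK\bigl(K,\cK/B(X\times_B G,Y)\bigr)$, and one then passes to equalizers, using that $\cK(K,-)$ preserves them; weak Hausdorffness of $\Map_B$ plays no role.
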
 

\begin{proof}

To construct limits in $G\cKB$ one first constructs the corresponding limit
in $\cKB$  and then equips it with the induced $G$-action.  To construct
colimits in $G\cKB$ one first constructs the colimit in $\cKB$ and then one
observes that, since $G\times_B (-)$ is a left adjoint and therefore preserves
colimits,  the colimit in $\cKB$ comes equipped with a natural $G$-action.
The category $G\cKB$ is naturally enriched over $\cK$; if $X$ and $Y$ are objects of
$G\cKB$ then the space of morphisms $G\cKB(X,Y)$ 
is given by the equalizer diagram
\[ 
	G\cKB(X,Y)\to \cKB(X,Y)\rightrightarrows \cKB(X\times_B G,Y)  
\]
in $\cK$, where the last two maps are induced by the actions of $G$ on 
$X$ and $Y$ respectively, i.e.\ the maps which send 
a map $f\colon X\to Y$ in $\cKB$ to the compositions $X\times_B G\xrightarrow{f\times_B 1_G} Y\times_B G 
\to Y$ and $X\times_B G\to X\xrightarrow{f} Y$. 
If $X\in G\cKB$ and $K\in \cK$ then the tensor
$X\otimes K$ is the usual one in $\cKB$ equipped with the $G$-action where
$G$ acts trivially on the $K$ factor. The cotensor in $G\cKB$ is the usual
cotensor in $\cKB$ equipped with an action of $G$ described as follows. The
commutative diagram
\[ 
	\xymatrix{ 
		G \ar[d] \ar[r] & \Map(K,G) \ar[d] \\ 
		B \ar[r] & \Map(K,B), 
	} 
\] 
where the top horizontal map is the adjoint of the projection $G\times K\to G$ in $\cK$,
shows that there is a natural morphism $G\to \Map_B(K,G)$ in $\cKB$.  The
action of $G$ on $\Map_B(K,X)$ is given by the following composite:
\[ 
	\Map_B(K,X)\times_B G \to \Map_B(K,X)\times_B \Map_B(K,G) 
	\to \Map_B(K,X),  
\] 
where the second map is induced by the action of $G$ on $X$ via the identification 
\[
	\Map_B(K,X)\times_B \Map_B(K,G) \cong \Map_B(K,X\times_B G).
\]
One can check that this gives a $G$-action as claimed.  To check that we have
required adjunction homeomorphisms, observe that we have the following
isomorphisms  of diagrams in $\cK$:
\[ 
	\xy 
	(-41,0)*+{\cKB(X\times K,Y)}; 
	(-19,0)*+{\cong}; 
	(0,0)*+{\cKB(X,Y^K)};
	(18.5,0)*+{\cong};
	(42,0)*+{\cK(X,\cKB(X,Y))};
	(-41,-15)*+{\cKB((X\times_B G)\times K,Y)}; 
	(-19,-15)*+{\cong}; 
	(0,-15)*+{\cKB(X\times_B G,Y^K)};
	(18.5,-15)*+{\cong};
	(42,-15)*+{\cK(K,\cKB(X\times_B G,Y))};
	{\ar (-42,-3)*{};(-42,-12)*{}};
	{\ar (-40,-3)*{};(-40,-12)*{}};
	{\ar (-1,-3)*{};(-1,-12)*{}};
	{\ar (1,-3)*{};(1,-12)*{}};
	{\ar (41,-3)*{};(41,-12)*{}};
	{\ar (43,-3)*{};(43,-12)*{}};
	\endxy
\]
where we have used the fact that we have an isomorphism $(X\times K)\times_B
G\cong (X\times_B G)\times K$.  Therefore, on forming equalizers we get the
required natural isomorphisms
\[
	G\cKB(X\times K,Y)\cong G\cKB(X,Y^K)\cong \cK(K,G\cKB(X,Y)),  
\]
using the fact that $\cK(K,-)$ preserves equalizers.
\end{proof} 

Let $G$ continue to denote a group object in $\cKB$.   In $G\cKB$  there are natural notions of
$f$-equivalence, $f$-fibration, $f$-cofibration and $\bar{f}$-cofibration.
Thus a map $g\colon X\to Y$ in $G\cKB$ is an $f$-cofibration if it has the
LLP in $G\cKB$ with respect to  $G$-maps of the form $p_0\colon
\Map_B(I,Z)\to Z$ for all $Z$ in  $G\cKB$. Similarly, we say that a map
$g\colon X\to Y$ in $G\cKB$ is an $f$-equivalence if it is a fiberwise
$G$-homotopy equivalence.  A map $g\colon X\to Y$ in $\cKB$ is  an
$f$-fibration if it has the RLP in $G\cKB$ with respect to $G$-maps of the
form $i_0\colon Z\to Z\times I$ for all $Z$ in $G\cKB$.   A map $g\colon
X\to Y$ in $G\cKB$ is an $\bar{f}$-cofibration if it has the LLP in $G\cKB$
with respect to all $f$-acyclic $f$-fibrations in $G\cKB$.

Just as above, there is a criterion to detect when an inclusion $i\colon A\to
X$ in $G\cKB$ is an $\bar{f}$-cofibration in $G\cKB$.  We have the  following
result which will play a key role in the proof of Theorem~\ref{main result}.

\begin{lemma} 
\label{ndr}

	An inclusion $i\colon A\to X$ in $G\cKB$ is an $\bar{f}$-cofibration if
	and only if $i(A)$ is closed in $X$ and there is a representation of
	$(X,A)$ as a $G$-fiberwise NDR pair.

\end{lemma}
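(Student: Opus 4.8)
The plan is to reduce to the corresponding non-equivariant statement (Lemma 5.2.4 of \cite{MaySig}) by running the same argument inside the topological bicomplete category $G\cK/B$, the only genuinely new content being the bookkeeping of the $G$-action. Throughout, by a \emph{$G$-fiberwise NDR pair} I mean the evident equivariant analogue of the fiberwise NDR pair recalled before Theorem~\ref{may sig model thm}: a $G$-invariant map $u\colon X\to I$ with $A=u^{-1}(0)$, together with a fiberwise $G$-homotopy $h\colon X\times_B I\to X$ (with $G$ acting trivially on the $I$-coordinate, as the tensor in $G\cK/B$ dictates) satisfying $h_0=\id$, $h_t|_A=\id_A$ for all $t$, and $h_1(x)\in A$ whenever $u(x)<1$.

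First I would record the retraction criterion for strong cofibrations, valid in any topological bicomplete category and hence, by Lemma~\ref{GK/B top bicomplete}, in $G\cK/B$: the inclusion $i\colon A\to X$ is an $\bar{f}$-cofibration in $G\cK/B$ if and only if the canonical map
\[
j\colon N:=X\cup_{A}(A\times_B I)\longrightarrow X\times_B I
\]
admits a retraction $r$ in $G\cK/B$, where $A$ maps to $X$ by $i$ and into $A\times_B I$ at time $0$, and $N$ is regarded as the subobject $(X\times_B\{0\})\cup(A\times_B I)$ of the cylinder. Two points make this the correct equivariant statement. The tensor $X\otimes I$ in $G\cK/B$ is the cylinder $X\times_B I$ with $G$ acting trivially on the $I$-factor (as in the proof of Lemma~\ref{GK/B top bicomplete}); and the pushout $N$ is created by the forgetful functor to $\cK/B$ (equivalently $G\times_B-$, being a left adjoint, preserves this colimit), so the underlying space of $N$, its topology as a subspace of $X\times_B I$, and the closedness of $i(A)$ are all detected in $\cK/B$ exactly as in the non-equivariant case.

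For the implication from an NDR representation to a cofibration, I would feed $(u,h)$ into the classical explicit formula producing a retraction $r\colon X\times_B I\to N$ out of $u$ and $h$. Since this formula is natural in $(u,h)$ and uses the $I$-coordinate only through $u$ and the time-variable of $h$, and since $u$ is $G$-invariant, $h$ is a $G$-map, and $G$ acts trivially on $I$, the resulting $r$ is automatically a $G$-map over $B$; hence $i$ is an $\bar{f}$-cofibration. Conversely, given that $i$ is an $\bar{f}$-cofibration, the criterion above supplies a $G$-equivariant fiberwise retraction $r=(\rho,\sigma)\colon X\times_B I\to N\hookrightarrow X\times_B I$. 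I would then extract the NDR data by the usual recipe, setting $h:=\rho$ and $u(x):=\sup_{t\in I}\bigl(t-\sigma(x,t)\bigr)$, and verify the NDR axioms together with $A=u^{-1}(0)$ exactly as in the non-equivariant proof (continuity of this supremum as a map of $k$-spaces being the same check as in \cite{MaySig}).

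The main thing to watch — and really the only difference from the non-equivariant argument — is equivariance of the extracted data. Here the key enabling observation is that $G$ acts trivially on the cylinder coordinate: because $r$ is a $G$-map and $\sigma$ takes values in $I$ with trivial action, one has $\sigma(gx,t)=\sigma(x,t)$, so $u$ is $G$-invariant, while $h=\rho$ inherits $G$-equivariance directly from $r$. Closedness of $i(A)$ is then automatic from $A=u^{-1}(0)$ with $\{0\}$ closed in $I$. Combining the two directions with this bookkeeping yields the lemma. I expect no conceptual obstacle beyond ensuring that every lift and every constructed map is performed in $G\cK/B$ rather than merely in $\cK/B$, so that $G$-equivariance is never lost.
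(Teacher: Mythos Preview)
Your central claim---that in any topological bicomplete category $i$ is an $\bar{f}$-cofibration if and only if $X\times_B I$ retracts onto the mapping cylinder $N$---is not correct, and this undermines both directions of your argument. The retraction characterizes $f$-cofibrations (the homotopy extension property), not $\bar{f}$-cofibrations (strong cofibrations, defined by the LLP against $f$-acyclic $f$-fibrations). The paper itself emphasizes this distinction just before Lemma~\ref{ndr}: every $\bar{f}$-cofibration is an $f$-cofibration, but the converse holds only for \emph{closed} $f$-cofibrations, and that implication is a nontrivial theorem of May--Sigurdsson in $\cK/B$, not a formal fact valid in an arbitrary topological bicomplete category.

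Concretely, in the direction NDR $\Rightarrow$ $\bar{f}$-cofibration, your retraction only yields an $f$-cofibration; the upgrade to an $\bar{f}$-cofibration requires an additional argument. The paper supplies this by adapting Str{\o}m's Theorem~3 to $G\cK/B$: from the map $u$ one produces $\psi\colon X\times_B I\to I$ with $\psi^{-1}(0)=Mi$, and then shows $Mi\to X\times_B I$ has the LLP against all $f$-fibrations. In the other direction, the function $u(x)=\sup_t\bigl(t-\sigma(x,t)\bigr)$ extracted from a retraction need not satisfy $A=u^{-1}(0)$; in general one only gets $A\subset u^{-1}(0)$, so you cannot conclude closedness of $i(A)$ this way. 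The paper instead proves closedness by a separate argument that genuinely uses the $\bar{f}$-cofibration lifting property, lifting $i$ against the $f$-acyclic $f$-fibration $E=A\times_B I\cup X\times_B(0,1]\to X$ to obtain $\lambda\colon X\to E$ with $i(A)=\lambda^{-1}\psi^{-1}(0)$, and only then deduces the NDR representation. Your equivariance bookkeeping is fine, but the missing ingredient is precisely this two-step passage between $f$-cofibrations and $\bar{f}$-cofibrations via closedness.
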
 

Here by a representation of $(X,A)$ as a $G$-fiberwise NDR pair we understand,
in analogy with \cite{Steenrod}, that there is a pair  $(u,h)$ of maps with
$u\colon X\to I$ and $h\colon X\times I\to X$ which represent $(X,A)$ as a
fiberwise  NDR pair and which satisfy $u(xg) = u(x)$ for all $x\in X$ and
$g\in G$, and $h(xg,t) = h(x,t)g$ for all  $(x,t)\in X\times I$ and $g\in
G$.

\begin{proof} 

We will explain how to adapt Steps 1--3 in the proof of Theorem 4.4.4  of
\cite{MaySig} to our setting.  Step 3 adapts in a straightforward  way to show
that $i(A)$ is closed in $X$: factor the inclusion $i\colon A\to X$ as $A\to
E\to X$ where  $E = A\times I \cup X\times (0,1]$ and where  $i_0\colon
A\to E$ is given by $i_0(a) = (a,0)$.  Analogous to  the corresponding
statement in  \cite{MaySig}, the projection $\pi\colon E\to X$ is an
$f$-acyclic  $f$-fibration in $G\cKB$. Therefore there exists a  map
$\lambda\colon X\to E$ extending $i$, i.e.\ $\lambda\circ i = i_0$.   Let
$\psi\colon E\to I$ denote the projection onto the second factor and  note
that $\psi^{-1}(0) = i_0(A)$, so that $i_0(A)$ is closed in $E$.   Therefore
$\lambda^{-1}\left(i_0(A)\right) = i(A)$ is closed in $X$ (since $\lambda$ is injective).
Standard  arguments now show that $(X,A)$ has a representation as a
$G$-fiberwise NDR pair.

Next we explain how Steps 1 and 2 can be adapted to show  that if $(X,A)$ has
a representation as a $G$-fiberwise NDR pair  then $i\colon A\to X$ is an
$\bar{f}$-cofibration.  The usual argument shows that  $X\times\{0\}\cup
A\times I$ is a retract of $X\times I$ in  $G\cKB$.  Hence $i\colon A\to
X$ is a closed $f$-cofibration in  $G\cKB$ and so $Mi\to X\times I$ is the
inclusion of a strong  deformation retraction (see Lemma 4.2.5 of
\cite{MaySig}), where $Mi$ is the mapping cylinder of $i$.   The map $u$ in a representation $(u,h)$ of $(X,A)$ as a
$G$-fiberwise  NDR pair can be used to show that there exists a map
$\psi\colon   X\times I\to I$  such that $\psi^{-1}(0)= Mi$.  The analogue
of Theorem 3  of \cite{Strom} for the category $G\cKB$ then shows that $Mi\to
X\times I$  has the LLP with respect to all $f$-fibrations and hence
$i\colon A\to X$ is an $\bar{f}$-cofibration.
\end{proof}

Finally, let us note (\cite{SV} Lemma 2.6) that since $\bar{f}$-cofibrations
in $G\cKB$ are defined by a  left lifting property, the following is true.

\begin{lemma} 
 \label{seq colimit lemma}

	If $X_0\to X_1 \to \cdots \to X_n\to \cdots$ is a sequence of
	$\bar{f}$-cofibrations  in $G\cKB$, then $X_n\to X$ is an
	$\bar{f}$-cofibration in $G\cKB$ for all $n\geq 0$,  where $X = \colim X_n$.

\end{lemma}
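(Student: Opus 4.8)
The plan is to exploit the fact, flagged in the statement, that $\bar{f}$-cofibrations in $G\cK/B$ are characterized by a left lifting property: a map is an $\bar{f}$-cofibration precisely when it has the LLP with respect to every $f$-acyclic $f$-fibration in $G\cK/B$. Any class of maps cut out by such a lifting property is automatically closed under sequential composition, and it is this formal argument I would carry out. Fix $n\geq 0$ and let $p\colon E\to D$ be an arbitrary $f$-acyclic $f$-fibration in $G\cK/B$; it suffices to solve every lifting problem given by a commutative square whose left edge is $X_n\to X$ and whose right edge is $p$.

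So suppose given such a square, with top edge $f_n\colon X_n\to E$ and bottom edge $g\colon X\to D$. First I would construct, by induction on $m\geq n$, a compatible family of lifts $\ell_m\colon X_m\to E$. Set $\ell_n=f_n$. Given $\ell_m$, consider the square with top edge $\ell_m$, left edge the $\bar{f}$-cofibration $X_m\to X_{m+1}$, right edge $p$, and bottom edge the composite $X_{m+1}\to X\xrightarrow{g}D$; it commutes, and since $X_m\to X_{m+1}$ is an $\bar{f}$-cofibration while $p$ is an $f$-acyclic $f$-fibration, a lift $\ell_{m+1}\colon X_{m+1}\to E$ exists. By construction $\ell_{m+1}$ restricts to $\ell_m$ along $X_m\to X_{m+1}$ and satisfies $p\ell_{m+1}=g|_{X_{m+1}}$.

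The maps $\{\ell_m\}_{m\geq n}$ are compatible along the structure maps of the sequence, and since the tail $X_n\to X_{n+1}\to\cdots$ is cofinal we have $X=\colim_{m\geq n}X_m$; hence they determine a unique map $\ell\colon X\to E$. Here I would invoke Lemma~\ref{GK/B top bicomplete}: its proof shows that colimits in $G\cK/B$ are computed as the corresponding colimits in $\cK/B$ equipped with the induced $G$-action, so the forgetful functor creates the sequential colimit and a compatible family of $G$-maps $\ell_m$ genuinely assembles into a $G$-map $\ell$. That $\ell$ solves the original problem is then immediate: it restricts to $f_n=\ell_n$ on $X_n$, and $p\ell=g$ since the two agree after restriction to each $X_m$. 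As $p$ was arbitrary, $X_n\to X$ has the required LLP and is an $\bar{f}$-cofibration.

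The construction is purely formal once the lifting characterization is in hand, so I do not anticipate a genuine obstacle. The single point demanding care --- and the reason a bare appeal to cocompleteness will not suffice --- is the passage to the colimit: I must know that maps out of $X$ in $G\cK/B$ correspond exactly to compatible families of $G$-equivariant maps out of the stages $X_m$, which is precisely the content of the colimit computation in Lemma~\ref{GK/B top bicomplete}. Granting this, the equivariance and fiberwise continuity of the assembled lift $\ell$ come for free from the universal property.
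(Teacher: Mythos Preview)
Your proposal is correct and is precisely the standard argument the paper has in mind: the paper does not write out a proof at all but simply observes that $\bar{f}$-cofibrations in $G\cK/B$ are defined by a left lifting property and cites \cite{SV} Lemma~2.6 for the consequence. You have supplied the routine details of that cited result, and your care about the colimit being computed in $G\cK/B$ via Lemma~\ref{GK/B top bicomplete} is appropriate.
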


\section{Parametrized principal bundles} 
\label{sec: princ bundles}

In this section we study the notion of a principal bundle in $\cKB$ 
for a parametrized group $G$, in other words the notion of a parametrized 
principal bundle.  In particular we study some homotopy-theoretic properties of 
parametrized principal bundles for the homotopy theory of 
Theorem~\ref{may sig model thm}. The notion of parametrized principal 
bundle was introduced in \cite{Crabb-James}, we re-phrase it 
in the following way.

\begin{definition}[\cite{Crabb-James}]  
\label{def:fiberwise bundle}

	Let $G$ be a parametrized group.  A {\em parametrized principal $G$
	bundle} in $\cKB$  consists of a $G$-space $P$ in $\cKB$ together with a map 
	$\pi\colon P\to M$ such that (i) $\pi$ admits local sections (in $\cKB$) and (ii) the square 
	\begin{equation} 
	\label{strongly free} 
		\xy 
		(-9.5,6.25)*+{P\times_B G}="1"; 
		(-9.5,-6.25)*+{P}="2"; 
		(9.5,6.25)*+{P}="3"; 
		(9.5,-6.25)*+{M}="4"; 
		{\ar_-{p_1} "1";"2"}; 
		{\ar "1";"3"};
		{\ar^-{\pi} "3";"4"}; 
		{\ar_-{\pi} "2";"4"}; 
		\endxy 
	\end{equation}
	is a pullback in $\cKB$, where the horizontal map $P\times_B G\to P$ is the action of 
	$G$ on $P$ and the map $p_1$ is projection onto the first factor.  
\end{definition}

 The condition (ii) implies that the action of $G$ on $P$ is principal with 
$M$ as its space of orbits and that the action of $G$ preserves the fibers of $\pi$.  
The condition that $\pi\colon P\to M$ admits local sections 
means that for every point of $m$ of $M$ there is an open 
neighborhood $U_m\subset M$ of $m$ together with a fiberwise map 
$s\colon U_m\to P$ which is a section of $\pi$.

We use the standard terminology: $P$ is the {\em total space}, $M$ is the 
{\em base space} and $G$ is the {\em structure group} of a parametrized principal 
bundle, which we shall sometimes denote 
by $P(M,G)$.  A 
{\em morphism} $P(M,G)\to P'(M',G')$ of parametrized principal bundles consists of a triple of 
maps $f\colon M\to M'$, $\bar{f}\colon P\to P'$ and 
$\alpha\colon G\to G'$ in $\cKB$, where $\alpha$ is a 
homomorphism of parametrized groups and $\bar{f}$ is equivariant for $\alpha$.  
Parametrized principal bundles, together with the morphisms between 
them, form the category of parametrized principal bundles.

We make the following definition.  

\begin{definition}
\label{def:simp param principal bundle}

	A {\em simplicial parametrized principal bundle} is a simplicial object in 
	the category of parametrized principal bundles.  
	
\end{definition} 

Thus if $P(M,G)$ is a simplicial parametrized principal bundle with 
projection map $\pi\colon P\to M$, then each map $\pi_n\colon P_n\to M_n$ is a parametrized principal $G_n$-bundle 
and the face and degeneracy maps are morphisms of parametrized principal bundles.  

It is worth reformulating this definition in a slightly different way.  A simplicial parametrized principal 
bundle consists of a simplicial parametrized group $G$, a  
simplicial parametrized space $P$ equipped with an action 
of $G$ in $s\cKB$ and a map $\pi\colon P\to M$ which satisfies the analogs of (i) and (ii) in 
Definition~\ref{def:fiberwise bundle} above.  Thus the diagram analogous to~\eqref{strongly free} 
is a pullback in $s\cKB$ and the map $\pi$ admits local sections level-wise 
in the sense that $\pi_n\colon P_n\to M_n$ admits local sections for all $n\geq 0$. Note that we do not require any compatibility between these local sections and the face and degeneracy maps for the simplicial spaces.

Recall from Section~\ref{par model str} that the fiberwise geometric realization functor 
$|\cdot |\colon s\cKB\to \cKB$ preserves finite limits.  It follows that if $P(M,G)$ 
is a simplicial parametrized principal bundle then there is an induced action of $|G|$ (see 
Lemma~\ref{lem:geom realizn of gp object}) on $|P|$ 
in $\cKB$ such that the diagram 
\[
	\xymatrix{ 
		|P|\times_B |G| \ar[d]_-{p_1} \ar[r] & |P| \ar[d]^-{|\pi|} \\ 
		|P| \ar[r]_-{|\pi|} & |M| 
	} 
\]
is a pullback in $\cKB$.  In Theorem~\ref{main result} we will give conditions on 
$M$ and $G$ which ensure that the map $|\pi|\colon |P|\to |M|$ has local sections 
and hence that $|P|(|M|,|G|)$ is a parametrized principal bundle.  

If we consider morphisms of parametrized principal bundles with fixed structure 
group $G$ and fixed base space $M$ (both parametrized, of course), then,  
just as for ordinary principal bundles, every such morphism is an
isomorphism.  We denote the set of isomorphism classes of  parametrized principal
$G$-bundles on $M$ by $H^1(M,G)_{\cKB}$.

Every parametrized principal $G$-bundle $\pi\colon P\to M$ is a parametrized fiber
bundle in the sense that each point of $M$ has an open neighborhood $U$ such
that  the restriction of $P$ to $U$ is isomorphic to the trivial parametrized $G$-bundle $U\times_B G$. 
If $G$ is fibrant, such a trivial parametrized fiber bundle is an
$f$-fibration in the sense of Theorem~\ref{may sig model thm}.  When $B$ is a
point it is a well known theorem  that every numerable fiber bundle $E\to M$
is a Hurewicz fibration. There is an obvious extension of this notion to the
notion of a numerable parametrized fiber bundle: a parametrized fiber bundle is
{\em numerable} if it is fiberwise locally trivial relative to a numerable open
cover of the base space. We have the following theorem from 
\cite{Crabb-James}.

\begin{theorem}[\cite{Crabb-James}] 
\label{thm:CJ locally trivial}
	Let $p\colon E\to M$ be a map in $\cKB$. Suppose that $p^{-1}V_i\to V_i$
	is an $f$-fibration for each open set $V_i$ in a numerable covering
	$(V_i)_{i\in I}$ of $M$.   Then $p$ is an $f$-fibration.  In particular, if $G$ is a 
	fibrant parametrized group, then 
	any parametrized principal $G$-bundle $\pi\colon P\to M$ in $\cKB$ over a
	paracompact base space $M$, or more generally  any numerable parametrized 
	principal $G$ bundle in $\cKB$, is an $f$-fibration.

\end{theorem}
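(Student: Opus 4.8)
The plan is to treat this as the fiberwise analogue of Dold's uniformization theorem — that a map which restricts to a fibration over each member of a numerable cover is itself a fibration — and to adapt the classical partition-of-unity argument (in the spirit of \cite{May-Classifying}) to the parametrized category $\cK/B$. The first step is to recast the $f$-fibration condition in terms of a lifting function. For a map $p\colon E\to M$ in $\cK/B$, form the fiberwise mapping path space
\[
\Gamma_B p = \Map_B(I,M)\times_M E,
\]
the pullback in $\cK/B$ of the endpoint evaluation $p_0\colon \Map_B(I,M)\to M$ along $p$. Since $\cK/B$ is topological bicomplete, the adjunction between the tensor $-\times_B I$ and the cotensor $\Map_B(I,-)$ lets one run the usual universal test-object argument: $p$ has the fiberwise covering homotopy property if and only if there is a fiberwise lifting function, i.e.\ a map $\Lambda\colon \Gamma_B p\to \Map_B(I,E)$ in $\cK/B$ with $\Lambda(\omega,e)(0)=e$ and $p\circ\Lambda(\omega,e)=\omega$. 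The problem is thereby reduced to manufacturing a global lifting function for $p$ out of local ones.

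Next I would assemble the local data. Numerability of the covering $\{V_i\}$ of the total space $M$ provides a locally finite partition of unity $\{\phi_i\colon M\to I\}$ with $\supp\phi_i\subset V_i$. Over each $V_i$ the hypothesis that $p^{-1}V_i\to V_i$ is an $f$-fibration furnishes, by the first step applied to the restricted map, a local fiberwise lifting function $\Lambda_i$ defined on the subspace of $\Gamma_B p$ consisting of those $(\omega,e)$ for which $\omega$ takes values in $V_i$. The goal is to glue the $\Lambda_i$ into a single $\Lambda$.

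The heart of the proof is this gluing, which I would carry out using Moore paths so that concatenation becomes strictly associative. Given $(\omega,e)\in\Gamma_B p$, the continuous functions $t\mapsto \phi_i(\omega(t))$ together with local finiteness determine a subdivision of the parameter interval into finitely many subintervals, on each of which $\omega$ remains inside a single $V_i$; one then lifts $\omega$ piecewise using the corresponding $\Lambda_i$, starting each piece at the terminal point of the previous lift, and concatenates. The delicate point — and the step I expect to be the main obstacle — is verifying that this piecewise recipe is genuinely continuous as a map $\Gamma_B p\to\Map_B(I,E)$, so that the subdivision times and the selection of local lifting functions vary continuously, fiberwise over $B$, as $(\omega,e)$ varies. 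This is exactly where Dold's organization of the partition of unity (via a well-ordering of the index set) and the Moore-path concatenation are needed, and where one must check that the construction respects the structure maps to $B$ and remains internal to $\cK/B$; the subtlety of the fiberwise mapping space $\Map_B(I,-)$, which need not be weakly Hausdorff, means continuity must be checked at the level of $k$-spaces rather than by a naive pointwise argument.

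Finally, the two ``in particular'' assertions follow at once. A fiberwise principal $G$ bundle $\pi\colon P\to M$ is fiberwise locally trivial, so over each trivializing open set $U$ the restriction is isomorphic to the projection $U\times_B G\to U$, which was already observed to be an $f$-fibration; when $M$ is paracompact every open cover, in particular a trivializing one, is numerable, so the hypotheses of the theorem are satisfied. For a general numerable fiberwise principal bundle the trivializing cover is numerable by definition, and the same conclusion applies.
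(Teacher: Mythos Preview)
The paper does not give its own proof of this theorem: it is simply stated with attribution to \cite{Crabb-James}, and the paper immediately moves on to Corollary~\ref{concordance}. So there is no ``paper's own proof'' to compare against; the result is quoted as background.

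Your proposal is the standard Dold-style argument, transported to $\cK/B$, and this is essentially what one finds in \cite{Crabb-James}. The reformulation of the $f$-fibration condition via a fiberwise path-lifting function $\Lambda\colon \Gamma_B p\to \Map_B(I,E)$ is correct and is the right starting point; the adjunction between $-\otimes I$ and $\Map_B(I,-)$ in $\cK/B$ is exactly what makes the universal test-object reduction go through. The gluing via a partition of unity and Moore-path concatenation is the expected mechanism, and you are right to flag continuity of the patched lifting function as the only genuinely delicate step. One small caution: your description of the subdivision (``subintervals on each of which $\omega$ remains inside a single $V_i$'') is a bit loose --- in Dold's actual argument one does not subdivide the domain of $\omega$ according to where $\omega$ lands, but rather uses the values $\phi_i(\omega(t))$ to reparametrize and weight the successive local lifts, iterating over a well-ordering of a countable refinement of the index set. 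Getting the bookkeeping right there is what guarantees continuity, so if you were to write this out in full you would want to follow Dold (or \cite{May-Classifying}) more closely at that point rather than rely on a naive subdivision. The deduction of the ``in particular'' clauses is fine.
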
 

This theorem has the following important corollary.   In the parametrized
context, principal $G$-bundles $P_0$ and $P_1$ on $M$ are said to be {\em
fiberwise concordant}  if there exists a parametrized principal $G$-bundle $P$ on $M\times
I$ together with fiberwise isomorphisms $P_0\cong P|_{M\times \{0\}}$ and
$P_1\cong P|_{M\times \{1\}}$.  The fiberwise concordance  relation is clearly
an equivalence relation.  When $B$ is a point it is well known that there is a
bijection between the set of isomorphism classes of numerable principal $G$
bundles on $M$ and concordance  classes of principal $G$ bundles on $M$.  
From Theorem~\ref{thm:CJ locally trivial}, we see that in
the parametrized setting there is an analogous bijection.

\begin{corollary}  
\label{concordance} 

	Let $M$ be a paracompact space in $\cKB$ and let $G$ be a fibrant parametrized group.  
	Then there is a bijection between $H^1(M,G)_{\cKB}$ and the set of
	fiberwise concordance classes of parametrized principal $G$-bundles on $M$.

\end{corollary}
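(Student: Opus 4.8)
The plan is to exhibit a bijection by constructing the map that sends the isomorphism class $[P]$ of a fiberwise principal bundle $\pi\colon P\to M$ to its fiberwise concordance class. This map is well defined and surjective essentially by construction: if $P_0\cong P_1$ over $M$, then the pullback $q^*P_0$ along the projection $q\colon M\times I\to M$ is a fiberwise principal $G$ bundle on $M\times I$ whose restriction to $M\times\{0\}$ is $P_0$ and whose restriction to $M\times\{1\}$ is $P_0\cong P_1$, so $P_0$ and $P_1$ are fiberwise concordant; and surjectivity holds because every concordance class is, by definition, the class of some bundle on $M$. Hence the whole content of the corollary is the \emph{injectivity} of this map, i.e.\ the assertion that fiberwise concordant bundles over the paracompact base $M$ are isomorphic.

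Injectivity I would deduce from the following key lemma, a form of homotopy invariance: \emph{if $Q\to M\times I$ is a fiberwise principal $G$ bundle and $M$ is paracompact, then $Q|_{M\times\{0\}}$ and $Q|_{M\times\{1\}}$ are isomorphic over $M$.} Granting this, any concordance $Q$ between $P_0$ and $P_1$ gives $P_0\cong Q|_{M\times\{0\}}\cong Q|_{M\times\{1\}}\cong P_1$. To prove the lemma, observe first that $M\times I$ is paracompact, so the local triviality of $Q$ may be taken relative to a numerable cover; by the theorem of Crabb--James stated above the projection $Q\to M\times I$ is then an $f$-fibration. Let $r\colon M\times I\to M\times I$ be the retraction $r(m,s)=(m,0)$ onto $M\times\{0\}$ and let $h\colon (M\times I)\times_B I\to M\times I$ be the evident fiberwise homotopy from $\id$ to $r$. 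The goal is to build a $G$-equivariant bundle map $Q\to Q|_{M\times\{0\}}$ covering $r$; equivalently, by the universal property of the pullback, a $G$-map $Q\to r^*(Q|_{M\times\{0\}})$ over $\id_{M\times I}$. Since every morphism of fiberwise principal $G$ bundles over a fixed base is an isomorphism, such a map is automatically an isomorphism $Q\cong r^*(Q|_{M\times\{0\}})$, and restricting it along $M\times\{1\}\hookrightarrow M\times I$ yields $Q|_{M\times\{1\}}\cong (r^*(Q|_{M\times\{0\}}))|_{M\times\{1\}}=Q|_{M\times\{0\}}$, as required.

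The main obstacle is producing the covering map $G$-\emph{equivariantly}: the $f$-fibration property supplied by Crabb--James is the ordinary fiberwise covering homotopy property, and produces only a non-equivariant lift $\widehat h\colon Q\times_B I\to Q$ of $h$ with $\widehat h_0=\id$. To equivariantize it I would exploit the strong freeness of the action: the pullback square~\eqref{strongly free} supplies a translation function $\tau\colon Q\times_M Q\to G$ characterised by $q\cdot\tau(q,q')=q'$, and for each $q$, $g$, $t$ the points $\widehat h(q,t)g$ and $\widehat h(qg,t)$ lie in the same fiber of $\pi$, so they differ by a unique element $\theta(q,g,t)=\tau(\widehat h(q,t)g,\widehat h(qg,t))\in G$. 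A routine check shows $\theta$ satisfies a cocycle identity, and one uses it to modify $\widehat h$ into a genuinely $G$-equivariant lift $\widetilde h$ with $\widetilde h_0=\id$; its endpoint $\widetilde h_1$ is the desired $G$-equivariant map $Q\to Q|_{M\times\{0\}}$ covering $r$. Alternatively one can bypass this computation by first establishing, exactly as in the classical bundle-theoretic covering homotopy theorem, an equivariant fiberwise covering homotopy property for numerable fiberwise principal bundles, from which $\widetilde h$ is obtained directly. Either route completes the key lemma, hence injectivity, and hence the asserted bijection.
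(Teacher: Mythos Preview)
Your overall architecture matches the paper's exactly: the map from isomorphism classes to concordance classes is well defined and surjective for trivial reasons, and the real content is the key lemma that a fiberwise principal $G$-bundle on $M\times I$ with $M$ paracompact restricts to isomorphic bundles at the two ends.

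Where you diverge from the paper is in the proof of that lemma, and your primary route has a gap. You lift the homotopy $h$ non-equivariantly using the Crabb--James $f$-fibration property, obtain the discrepancy cocycle $\theta(q,g,t)=\tau(\widehat h(q,t)g,\widehat h(qg,t))$, and then assert that ``one uses it to modify $\widehat h$ into a genuinely $G$-equivariant lift''. But this step is not routine: writing $\widetilde h(q,t)=\widehat h(q,t)\alpha(q,t)$, the equivariance condition becomes $\alpha(q,t)^{-1}\,\theta(q,g,t)\,\alpha(qg,t)=1$, and you have given no mechanism for producing such a coboundary $\alpha$ from the cocycle $\theta$. The fact that $\theta$ satisfies a cocycle identity and that $\theta(q,g,0)=1$ does not by itself let you solve this equation continuously over all of $Q\times_B I$.

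The paper sidesteps this difficulty entirely by passing to an associated bundle. It forms $P\times_G(P_0\times I)$ over $M\times_B I$, whose sections are \emph{exactly} the $G$-equivariant bundle maps $P\to P_0\times I$; the given isomorphism $P|_{M\times\{0\}}\cong P_0$ provides a section over $M\times_B\{0\}$, and one then extends it using only the non-equivariant $f$-model structure: the associated bundle is fiberwise locally trivial, hence an $f$-fibration, and $M\times_B\{0\}\hookrightarrow M\times_B I$ is an $f$-acyclic $\bar f$-cofibration. This is precisely the ``equivariant covering homotopy property'' you allude to in your alternative, but packaged so that no equivariantization step is ever needed. I would recommend replacing your translation-function argument with this associated-bundle formulation; your alternative paragraph is essentially pointing at the same idea, but the paper's version makes the mechanism explicit.
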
 

\begin{proof}

To prove that there is such a bijection one needs to know that fiberwise
concordant bundles are isomorphic.  
For this, it is enough to prove that there is an isomorphism $P\cong P_0\times I$, when $P$ is a 
parametrized principal $G$-bundle on $M\times I$, and $P_0$ denotes the restriction to $M\times \{0\}$.
Consider the bundle $P\times_G
(P_0\times I)$ on  $M\times I$.  There is a section of this bundle over the
closed subspace  $M\times \{0\}$ of $M\times I$.  We want to know that
this section extends to a section  defined over $M\times I$.  Since
$P\times_G (P_0\times I)$ is a fiberwise locally  trivial bundle on $M\times
I$, it is an $f$-fibration.  Therefore the required  extension of the section
exists, since the inclusion $M\times \{0\}\subset  M\times I$ is an
$f$-acyclic $\bar{f}$-cofibration.  It follows that the set  of fiberwise
concordance classes of $G$-bundles on $M$ is isomorphic to $H^1(M,G)_{\cKB}$.
\end{proof}

We shall also need the following result, related to 
Theorem 12 of \cite{Strom2}.  
\begin{proposition}
\label{Stroms theorem}

	Let $\pi\colon P\to M$ be a numerable parametrized
	principal $G$ bundle for a fibrant parametrized group $G$ 
	and suppose that $A\subset M$ is a closed  inclusion which is an
	$\bar{f}$-cofibration in $\cKB$.  Then the closed inclusion $P|_A\subset
	P$ is an $\bar{f}$-cofibration in  $G\cKB$.

\end{proposition}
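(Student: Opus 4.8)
The plan is to verify the $G$-fiberwise NDR criterion of Lemma~\ref{ndr}. Since $A$ is closed in $M$ and $\pi$ is continuous, $P|_A = \pi^{-1}(A)$ is closed in $P$, so it remains to produce a representation $(u',H)$ of $(P,P|_A)$ as a $G$-fiberwise NDR pair. Because $A\subset M$ is a closed $\bar{f}$-cofibration in $\cK/B$, the fiberwise NDR criterion recalled in Section~\ref{par model str} supplies a pair $(u,h)$ representing $(M,A)$: a map $u\colon M\to I$ with $A = u^{-1}(0)$ and a homotopy $h\colon M\times_B I\to M$ over $B$ with $h_0 = \id_M$, $h_t|_A = \id_A$ for all $t$, and $h_1(m)\in A$ whenever $u(m)<1$. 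The invariant function is immediate: set $u' := u\circ\pi\colon P\to I$. Since $\pi(pg)=\pi(p)$, the map $u'$ is $G$-invariant, and $(u')^{-1}(0) = \pi^{-1}(A) = P|_A$.

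The real work is to lift $h$ to an equivariant homotopy $H\colon P\times_B I\to P$ with $\pi\circ H = h\circ(\pi\times_B\id)$, $H_0 = \id_P$, $H(pg,t) = H(p,t)g$, and $H_t|_{P|_A} = \id$. I reformulate this as a bundle problem. Write $X := M\times_B I$ and let $\mathrm{pr},h\colon X\to M$ denote the projection and the homotopy. Over $X$ we have two numerable principal $G$-bundles $\mathrm{pr}^*P = P\times_B I$ and $h^*P$. By the universal property of the pullback, a $G$-map $H\colon P\times_B I\to P$ covering $h$ is the same datum as a morphism of principal $G$-bundles $\mathrm{pr}^*P\to h^*P$ over $X$, which is necessarily an isomorphism. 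Thus constructing $H$ amounts to constructing an isomorphism $\Psi\colon \mathrm{pr}^*P\xrightarrow{\cong} h^*P$ of principal $G$-bundles over $X$. To encode the remaining boundary conditions, note that over the closed subspace $C := (M\times_B\{0\})\cup(A\times_B I)$ both bundles restrict canonically to the same bundle: over $M\times_B\{0\}$ both restrict to $P$ (as $h_0 = \id_M$), and over $A\times_B I$ both restrict to $P|_A\times_B I$ (as $h_t|_A = \id_A$). Hence the identity isomorphism defines a section of the isomorphism bundle $\mathrm{Iso}_G(\mathrm{pr}^*P,h^*P)$ over $C$, and any extension of this section to all of $X$ is exactly an isomorphism $\Psi$ restricting to the identity over $C$. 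The corresponding $H$ then automatically satisfies $H_0 = \id_P$ and $H_t|_{P|_A} = \id$, while $h_1(m)\in A$ for $u(m)<1$ forces $H_1(p)\in P|_A$ whenever $u'(p)<1$.

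It remains to extend the section over $C$ to a section over $X$. The isomorphism bundle $\mathrm{Iso}_G(\mathrm{pr}^*P,h^*P) = (\mathrm{pr}^*P\times_X h^*P)/G$ is a fiberwise fiber bundle over $X$ with fiber $G$, numerable because $P$ is, and hence an $f$-fibration by the theorem of Crabb and James quoted above; its sections are precisely the $G$-bundle isomorphisms $\mathrm{pr}^*P\to h^*P$. On the other hand, applying the pushout-product axiom of the topological model structure of Theorem~\ref{may sig model thm} to the $\bar{f}$-cofibration $A\to M$ and the acyclic cofibration $\{0\}\to I$ in $\cK$ shows that $C\hookrightarrow X$ is an $f$-acyclic $\bar{f}$-cofibration. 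The section over $C$ therefore extends to a section $\Psi$ over $X$ by the lifting of acyclic cofibrations against fibrations, exactly as in the proof of Corollary~\ref{concordance}. Transporting $\Psi$ back through the pullback adjunction yields the desired equivariant homotopy $H$, and $(u',H)$ represents $(P,P|_A)$ as a $G$-fiberwise NDR pair; by Lemma~\ref{ndr}, $P|_A\subset P$ is an $\bar{f}$-cofibration in $G\cK/B$.

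I expect the main obstacle to be the middle step: recasting the equivariant covering-homotopy problem as a single section-extension problem for the isomorphism bundle, and then verifying that $C\hookrightarrow X$ is an acyclic cofibration. Once the equivariance is packaged into the statement that sections of $\mathrm{Iso}_G(\mathrm{pr}^*P,h^*P)$ are $G$-bundle isomorphisms, and the boundary data over $C$ is checked to be consistent, the construction reduces to the $f$-fibration property of numerable bundles and the lifting axioms already available in the $f$-model structure.
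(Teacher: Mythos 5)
Your proof is correct and is essentially the paper's argument: both reduce the equivariant NDR condition of Lemma~\ref{ndr} (with the same invariant function $u\circ\pi$) to extending a section of a numerable associated bundle over $M\times_B I$ from the subspace $(A\times_B I)\cup(M\times_B\{0\})$, using that numerable fiberwise bundles are $f$-fibrations and that this inclusion is an $f$-acyclic $\bar{f}$-cofibration. The only difference is packaging: the paper first produces an equivariant lift $\bar{h}$ of $h(\pi\times 1)$ and then corrects it by a gauge transformation obtained as a section of $\Aut_0(P\times I)=(P\times I)\times_G G$, whereas you fold both steps into a single section-extension problem for the isomorphism bundle $\mathrm{Iso}_G(\mathrm{pr}^*P,h^*P)$ with prescribed boundary values over $C$ --- a slightly cleaner formulation of the same idea.
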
 

\begin{proof} 

The proof of the analogous result in \cite{Strom2} can be adapted to this
setting as follows.  Choose a representation $(u,h)$ of  $(M,A)$ as a
fiberwise NDR pair in $\cKB$.  Next observe that in the diagram
\[
	\xymatrix{ 
		P \ar[d]_-{i_0} \ar[r] & P \ar[d]^-{\pi} \\ 
		P\times I \ar[r]_-{h(\pi\times 1)} \ar@{.>}[ur]^-{\bar{h}} & M 
	} 
\]
the indicated lifting $\bar{h}$ can be found, and moreover can be chosen to be
$G$-equivariant, in  light of the proof of Corollary~\ref{concordance} above.
To finish the proof, we need to show  that we can choose $\bar{h}$ so that
$\bar{h}(x,t) = x$ for any $x\in P|_A$.  Consider the  associated bundle
$\Aut_0(P\times I) = (P\times I)\times_G G$ on $M\times I$, where the 
action of $G$ on itself  is conjugation.  Note that sections of
$\Aut_0(P\times I)$ are bundle automorphisms of $P\times I$  covering the
identity on $M\times I$.  Since $\pi \bar{h} = h(\pi\times 1)$ and $\bar{h}$
is equivariant, it follows that  $\bar{h}$ restricts to a section of
$\Aut_0(P\times I)$ over $A\times I\subset M\times I$.  Similarly  the
restriction of $\bar{h}$ to $P\times \{0\}$ defines a section of $\Aut(P\times
I)$ over $M\times \{0\}$.   Since $\Aut_0(P\times I)\to M\times I$ is a
locally trivial, numerable, parametrized bundle, and $(A\times I)\cup (M\times
\{0\})  \subset M\times I$ is a closed $\bar{f}$-cofibration, it follows that
we can  find the indicated lifting in the diagram
\[
	\xymatrix{ 
	(A\times I)\cup (M\times \{0\}) \ar[d] \ar[r] & \Aut_0(P\times I) \ar[d] \\ 
	M\times I \ar[r]_-1 \ar@{.>}[ur]^-{\bar{k}} & M\times I. } 
\]
Now define $\tilde{h} = \bar{h}\bar{k}^{-1}$.  Then $\tilde{h}\colon P\times
I\to P$ is  $G$-equivariant and satisfies $\pi \tilde{h} = h(\pi\times 1)$.
If we set $\tilde{u} = u\pi$  then it is easily checked that
$(\tilde{u},\tilde{h})$ is a representation of  $(P,P|_A)$ as a
$G$-equivariant NDR pair.
\end{proof}

\section{Simplicial principal bundles and twisted cartesian products} 
\label{sec: simplicial bundles}

In this section we recall the notion of principal twisted Cartesian product defined 
internally to a category $\cC$ with finite limits, and we recall the definition of 
the universal simplicial $G$-bundle $WG\to \overline{W}G$ associated to a group object $G$ in 
$\cC$.  Recall the following classical definition (see for instance \cite{May-SOAT}).  

\begin{definition}

	Let $G$ be a group in $\sSet$.  A {\em principal twisted cartesian product} with 
	structure group $G$ in 
	$\sSet$ consists of a $G$-simplicial set $P$ and a map $\pi\colon P\to M$ 
	such that $\pi$ has a pseudo-cross section and the diagram analogous to~\eqref{strongly free} 
	above is a pullback.  
	
\end{definition}

By a pseudo-cross section (Definition 18.5 of 
\cite{May-SOAT}) we mean a collection of maps $\sigma_n\colon M_n\to P_n$ for 
all $n\geq 0$ such that $\sigma_i s_i = s_i\sigma_i$ for all $0\leq i\leq n+1$, 
$n\geq 0$, and $\sigma_i d_i = d_i\sigma_i$ for all $0< i\leq n$ and $n\geq 0$.    
We note that a pseudo-cross section can be conveniently reformulated in terms 
of Illusie's d\'{e}calage functor (see \cite{Duskin Mem} and also the discussion below) and that this leads 
to a simple description of the classifying theory of principal twisted cartesian 
products (see \cite{Ste1}).  

It is clear from the preceding discussion that we may replace the category 
$\Set$ of sets with any category $\cC$ with finite limits  and obtain the notion of principal twisted 
 cartesian product {\em internal to the category} $s\cC$ of simplicial objects in $\cC$.  
Of particular interest for us will be the case where $\cC = \cKB$; 
note that principal twisted cartesian products in this case are examples 
of simplicial parametrized principal bundles 
(Definition~\ref{def:simp param principal bundle}).  

The data of a principal twisted cartesian product may be conveniently reformulated 
in terms of {\em twisting functions}, as we now recall.  A family of maps $t_n\colon M_n\to 
G_{n-1}$ defined for $n\geq 1$ is called a {\em twisting function} if the identities ($T$) on 
page 71 of \cite{May-SOAT} are satisfied,  when 
interpreted internally in the obvious fashion.  Every principal twisted 
cartesian product determines a unique twisting function, and conversely a 
twisting function determines a principal twisted cartesian product 
\[
M\times_t G
\] 
in which the object of $n$-simplices is the product $(M\times_t G)_n = M_n\times G_n$, 
and where the face and degeneracy maps are defined as in Definition 18.3 of 
\cite{May-SOAT}.  In particular the description in terms of twisting functions 
explains the origin of the terminology `twisted cartesian product'.    

If $G$ is a simplicial group internal to $\cC$ (for instance a simplicial 
group or a simplicial parametrized group), then the universal $G$ bundle $WG\to \overline{W}G$ 
has a convenient description via twisting functions.  

\begin{definition}
\label{def:classifying complex}
Let $\cC$ be a category with finite limits and let $G$ be a group 
in $s\cC$.  The {\em classifying complex} $\overline{W}G$ is defined to be 
the simplicial object of $\cC$ with $(\overline{W}G)_0 = 1$, the terminal object of $\cC$, and 
\[
(\overline{W}G)_n = G_{n-1}\times \cdots \times  G_0 
\]
for $n\geq 1$, with face and degeneracy maps defined by the following formulae: 
\begin{align*}
& d_i(g_{n-1},\ldots,g_0) = (d_i(g_{n-1}),\ldots, (d_i(g_i))g_{i-1},\ldots,g_{i-2},\ldots,g_0) \\ 
& s_i(g_{n-1},\ldots,g_0) = (s_i(g_{n-1}),\ldots,s_i(g_i),1,g_{i-1},\ldots,g_0), 
\end{align*}
if $(g_{n-1},\ldots,g_0)\in (\overline{W}G)_n$.  
\end{definition}

When $\cC = \Set$ is the category of sets and so $G$ is an ordinary simplicial group, this is the traditional classifying complex 
construction introduced in \cite{Kan}.  In the next section we shall make a more 
careful study of this construction in the case when $\cC = \cKB$.  Note that 
when $G$ is group in $\cC$ and we abusively denote by $G$ the constant 
simplicial group in $\cC$ with all face and degeneracy maps equal to the identity, 
then $\overline{W}G$ reduces to the familiar description in terms of the 
nerve of the one-object groupoid $G$ in $\cC$.  Therefore, in this case 
we have the identification 
\begin{equation}
\label{eq:simp BG}
(\overline{W}G)_n = G\times \cdots \times G\quad (\text{$n$ factors}) 
\end{equation}
with face and degeneracy maps defined by the usual formulae: 
\begin{align} 
& d_i(g_0,\ldots,g_{n-1}) = \begin{cases} 
					(g_1,\ldots,g_{n-1})  \quad \text{if}\ i=0, \\ \label{eq:face maps for BG}
					(g_0,\ldots, g_{i-1}g_i,\ldots, g_{n-1})\quad   \text{if}\ 1\leq i\leq n-1, \\
					(g_0,\ldots,g_{n-2})  \quad \text{if}\ i=n 
					\end{cases} 										\\ 
& s_i(g_0,\ldots,g_{n-1}) = (g_0,\ldots,g_{i-1},1,g_i,\ldots,g_n). \label{eq:degen maps for BG}
\end{align} 
Alternatively, we may think of the one-object groupoid $G$ as the action groupoid 
$1//G$ in $\cC$, associated to the trivial action of $G$ on the terminal object $1$ of $\cC$.  
 Although it will not play an important role in this paper, we mention in passing 
a very useful conceptual approach to the classifying complex construction due to Duskin.  

For every $n\geq 0$, we may form the simplicial object $N(1//G_n)$ which is the nerve of 
the action groupoid $1//G_n$ associated to the group $G_n$; in this way we obtain a 
bisimplicial object $N(1//G)$ in $\cC$.  In the paper \cite{AM}, Artin and Mazur introduced 
the construction of the {\em total simplicial set} $T(X)$ associated to a bisimplicial set $X$.  
This construction makes sense in any category $\cC$ with finite limits and defines a 
functor 
\[
T\colon ss\cC\to s\cC, 
\]
called the total simplicial object functor.  It is not hard to show, using explicit formulas for  
face and degeneracy maps, that there is an isomorphism 
\[
\overline{W}G = T(N(1//G)) 
\]
of simplicial objects in $\cC$.  Besides the conceptual understanding that this 
observation brings to the classifying complex construction, it also gives a useful perspective 
on the construction of the universal principal twisted cartesian product over 
$\overline{W}G$.  

The right action of $G$ on itself defines an action groupoid $G//G$ in $\cC$; there 
is a natural functor $G//G\to 1//G$ and hence a simplicial map 
\begin{equation}
\label{eq:T(N) bundle} 
T(N(G//G))\to T(N(1//G)) 
\end{equation} 
on taking nerves and applying the total simplicial object functor.  It is straightforward 
to see that there is a canonical action of the simplicial group $G$ on 
$T(N(G//G))$ such that the diagram analogous to~\eqref{strongly free} above 
is a pullback.  With a little more work, exploiting the close relationship between 
the functor $T$ and Illusie's d\'{e}calage functor, one may show that the map~\eqref{eq:T(N) bundle} 
has a pseudo-cross section, and hence has a natural structure as a principal 
twisted cartesian product.  It is not hard to show that the principal twisted 
cartesian product~\eqref{eq:T(N) bundle} is equal to the universal twisted 
cartesian product (see pages 88--89 of \cite{May-SOAT})
\[
WG\to \overline{W}G 
\]
defined in terms of the canonical twisting function $t$ on $\overline{W}G$ defined by 
\[
t_n\colon (\overline{W}G)_n\to G_{n-1},\quad t_n(g_{n-1},\ldots,g_0) = g_{n-1}.  
\]
We summarize the preceding discussion in the following lemma.  

\begin{lemma} 
\label{lem:univ ptcp}
Let $\cC$ be a category with finite limits and let $G$ be a group in $s\cC$.  Then there is a 
canonical principal twisted cartesian product $WG\to \overline{W}G$ with structure group $G$.  
Moreover $WG$ has a natural structure as a group in $s\cC$ containing $G$ as a subgroup.  
\end{lemma}      

The only statement in Lemma~\ref{lem:univ ptcp} that has not been discussed above is the 
statement regarding the group structure on $WG$; this is a simple consequence of the description 
of $WG$ in terms of the total simplicial object functor.  We refer to \cite{Roberts} for 
further discussion of this.  

Finally, we note that there is another useful perspective on the universal principal twisted cartesian product 
$\pi\colon WG\to \overline{W}G$; the map $\pi$ is equal to the canonical map $\Dec_0\overline{W}G\to \overline{W}G$, where 
$\Dec_0\colon s\cC\to s\cC$ is the d\'{e}calage functor.  In this description 
the pseudo-cross section appears as a certain monadic structure on the functor $\Dec_0$.     

Recall (see for example  \cite{Duskin Mem,Illusie, Ste1}, that $\Dec_0$ is the
functor which shifts degrees up by one so that if $X$ is a simplicial  object
in $\cC$ then $\Dec_0(X)_n = X_{n+1}$ with the first face and degeneracy  map
at each level forgotten or `stripped away'.  In other  words $\Dec_0$ is the
functor induced by restriction along  the functor $\sigma_0\colon \Delta\to
\Delta$,  where $\sigma_0$ is defined by  $\sigma_0([n]) = \sigma([0],[n])$, where
$\sigma\colon  \Delta\times \Delta\to \Delta$ is ordinal sum, i.e.\
$\sigma([m],[n]) = [m+n+1]$.   Observe that the first face map at every level
defines a  simplicial map $d_{\mathrm{first}}\colon \Dec_0X\to X$ for any simplicial
object $X$ in $\cC$ which in degree $n$ is given by  $d_{0}\colon X_{n+1}\to
X_n$.

\section{Geometric realization of simplicial principal bundles} 
\label{sec:geom_realzn}

In this section we show that fiberwise geometric realization of a large class of 
simplicial parametrized principal bundles gives parametrized principal bundles.  
We discuss sufficient conditions on a simplicial parametrized group $G$ to ensure 
that $G$ is good and $\overline{W}G$ is proper 
(see Definition~\ref{def:proper simplicial obj} below).  

Recall from Section~\ref{sec: princ bundles} that if $P(M,G)$ is a simplicial parametrized 
principal bundle, then after taking fiberwise geometric 
realizations there is a principal action of $|G|$ on $|P|$ with $|M|$ 
as the space of orbits.  To prove that $|\pi|\colon |P|\to |M|$ is the projection 
map in a parametrized principal bundle all that remains is to prove that 
$|\pi|$ admits local sections.

We will show that a sufficient condition for 
this is that (a) the group $G$ be fibrant in the sense of Definition~\ref{def:fibrant param gp} 
and that (b) $M$ satisfies a cofibrancy condition.  This latter condition is 
the parametrized analog of May's notion of {\em proper} simplicial space introduced in 
\cite{May-GILS}.  In fact this notion, and the allied notion of a 
{\em good} simplicial space \cite{Segal-Cats}, makes sense in any topological bicomplete 
category.  

\begin{definition} 
\label{def:proper simplicial obj}
	
	Let $\cC$ be a bicomplete topological category.  A  simplicial object $X$
	in $\cC$ is called {\em proper} if the latching maps $L_nX\to X_n$ are
	$\bar{h}$-cofibrations for all $n\geq 0$;  $X$ is called {\em good} if all
	of the degeneracy morphisms  $s_i\colon X_n\to X_{n+1}$ are 
	$\bar{h}$-cofibrations.

\end{definition}

In particular, specialized to the case where $\cC = \cKB$, we obtain the 
notion of a proper simplicial parametrized space.  With these definitions 
understood, we re-state Theorem~\ref{main result} from the Introduction.  

\mainresult*

Since the proof of Theorem~\ref{main result} is somewhat technical we have deferred it 
to Section~\ref{app: proof of main result}.  We discuss some consequences.  Observe that, subject 
to the hypotheses above, if $P\to M$ is a principal twisted cartesian product 
with structure group $G$, then $|P|\to |M|$ is a numerable parametrized principal 
$|G|$ bundle.  An example of special interest is the universal principal 
twisted cartesian product $WG\to \overline{W}G$ (Lemma~\ref{lem:univ ptcp}); 
in order to apply Theorem~\ref{main result} 
in this case we need to investigate sufficient conditions for $\overline{W}G$ to 
be proper.    

In principle, it is easier to check that a simplicial object is good than it is 
to check that it is proper.  In Appendix~\ref{app:good implies proper} we give a 
proof, in the setting of a topological bicomplete category, of 
Proposition~\ref{prop:good_implies_proper}, which says that every 
good simplicial object is proper.  This fact is standard for simplicial 
spaces (see for instance \cite{GaunceLewis}; we show that the proof given in 
op.\ cit.\ carries through to this more general setting).  Therefore, we search 
for a condition on the simplicial parametrized group $G$ which ensures that 
$\overline{W}G$ is proper.       

Recall (Definition~\ref{def:well-sectioned group}) the notion of a 
well-sectioned simplicial parametrized group.  We will say that a simplicial 
parametrized group $G$ is a {\em good} simplicial group if the object in $s\cKB$ underlying
$G$ is good.   We recall the statement of Proposition~\ref{well pointed simp grp} from 
the introduction; it gives a condition on $G$ which ensures that 
$G$ is good, and that $\overline{W}G$ is good and hence proper.  

\wellpointedprop*

We have deferred the proof of Proposition~\ref{well pointed simp grp} to 
Section~\ref{app:proof of propn 3}.  
Note that there is a partial converse to the first statement: if $G$ is a good simplicial group in
$\cKB$ such that $G_0$ is well-sectioned, then $G_n$ is well-sectioned for
every $n\geq 0$.

Combining Theorem~\ref{main result}, Proposition~\ref{well pointed simp grp} 
and Lemma~\ref{lem:univ ptcp}
we obtain Proposition~\ref{corr} from the Introduction.  

\WGcorr*

Now we turn to the statement and proof of the main result of this paper.  
Let $G$ denote a parametrized group.  In \cite{Crabb-James} (see pages 37--39) a construction   of a universal
parametrized principal $G$-bundle is given, based on the Milnor  construction of a
universal bundle, using infinite joins.  This  model of the universal bundle
is very useful as it makes almost no  assumptions on $G$.  We will 
impose a mild restriction  on $G$---we will require that $G$ is well-sectioned---and build a 
model with more convenient properties.

If $G$ is well-sectioned, 
Proposition~\ref{corr} specializes, with $G$ regarded as a constant 
simplicial parametrized group, to the statement that 
\[
|WG|\to |\overline{W}G| 
\] 
is a numerable parametrized principal $G$-bundle.  Here 
$\overline{W}G$ is the simplicial parametrized space 
whose $n$-simplices are described in~\eqref{eq:simp BG} and whose 
face and degeneracy maps  are described in~\eqref{eq:face maps for BG} 
and~\eqref{eq:degen maps for BG}.  In the remainder of this section we shall write 
\[
BG := |\overline{W}G|\quad \text{and}\quad EG:= |WG|,  
\]
since, as we will see, the parametrized $G$-bundle $EG\to BG$ is a model 
for the universal parametrized $G$-bundle.  Firstly, let us note that 
if $H$ is another parametrized group, then there is a canonical isomorphism 
$\overline{W}(G\times_B H) = \overline{W}G\times_B \overline{W}H$ and hence 
a canonical isomorphism $B(G\times H) = BG\times_B BH$, since the fiberwise 
geometric realization functor preserves finite limits.  Thus by construction the classifying space functor $B(-)$ is product-preserving.

Recall Theorem~\ref{classifying thy} from the Introduction.    

\classifyingthm*
    
We now turn to the proof of this theorem.    

\begin{proof}
We make use of the fact that $H^1(M,G)_{\cKB}$ is  isomorphic
to the set of fiberwise concordance classes of fiberwise principal  $G$
bundles on $M$ (Corollary~\ref{concordance}).   We define a map
\begin{gather} 
	[M,BG]_{\cKB} \to H^1(M,G)_{\cKB} \label{isom1} \\ 
	[f] \mapsto [f^*EG] \notag 
\end{gather} 
for $f\colon M\to BG$.  It is easy to verify that this map is well defined.
To prove that it is a bijection we construct an inverse.  For this we need
some preparation.  

Suppose that $P$ is a parametrized principal $G$-bundle on $M$.  
Recall that the {\em \v{C}ech nerve} $\check{C}(P)$ 
of $P\to M$ is the augmented simplicial object 
\begin{equation} 
\label{cech nerve} 
	\xymatrix{ 
		\cdots \ar@<2ex>[r]_(0.3){:} & \ar@<2ex>[l]_(0.7){:} P\times_{M} P\times_{M}P 
		\ar@<2ex>[r] \ar@<1ex>[r] 
		\ar[r] & P\times_{M}P 
		\ar@<1ex>[l] \ar@<2ex>[l] 
		\ar@<1ex>[r] \ar[r] & P \ar@<1ex>[l] \ar[r] & M 
	}
\end{equation}
in $\cK_{/B}$ where the face and degeneracy maps are given by omission and
inclusions by diagonals.  Since $\check{C}(P)$ is augmented over $M$
it follows on taking fiberwise geometric realizations that we obtain a map
\[
	|\check{C}(P)|\to M  
\]
in $\cK_{/B}$.  The \v{C}ech nerve $\check{C}(Y)$ can of course be  defined for
any map $\pi\colon Y\to M$. It is a well known fact (essentially going back
to \cite{Segal-IHES})  that if $\pi$ admits local sections and $M$ is
paracompact then the  map $|\check{C}(Y)|\to M$ is a homotopy equivalence.
The following Lemma is a straightforward variation on this result whose proof we leave 
to the reader.

\begin{lemma} 
\label{segal lemma}

	If $\pi\colon Y\to M$ is a map in $\cK_{/B}$ which admits local  sections and
	$M$ is paracompact then the canonical map
	\[
		|\check{C}(Y)|\to M 
	\] 
	is a fiberwise homotopy equivalence.  

\end{lemma}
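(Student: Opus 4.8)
The plan is to adapt Segal's classical argument from \cite{Segal-IHES} to the fiberwise setting. The essential observation is that Segal's construction of a homotopy inverse to the augmentation proceeds fiber-by-fiber over $M$, so it automatically respects the projections to $B$; the fiberwise content beyond the classical statement is therefore mild, and the bulk of the work is the usual continuity bookkeeping.

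First I would fix the combinatorial data supplied by the hypotheses. Since $\pi\colon Y\to M$ admits local sections and $M$ is paracompact, I can choose a locally finite open cover $\{U_i\}_{i\in I}$ of $M$, fiberwise sections $s_i\colon U_i\to Y$ of $\pi$ over $B$, and a partition of unity $\{\phi_i\}_{i\in I}$ subordinate to $\{U_i\}$. For each $m\in M$ the index set $I(m)=\{i:\phi_i(m)>0\}$ is finite, and the points $s_i(m)$ for $i\in I(m)$ all lie in the single fiber $Y_m=\pi^{-1}(m)$, hence assemble into a point of $\check{C}(Y)_k$ with $k+1=|I(m)|$. I would then define a candidate homotopy inverse $\theta\colon M\to |\check{C}(Y)|$ by sending $m$ to the point of the realization represented by the tuple $(s_i(m))_{i\in I(m)}$ weighted by the barycentric coordinates $(\phi_i(m))_{i\in I(m)}$. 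Local finiteness makes $\theta$ continuous, and since each $s_i$ is a map over $B$ the map $\theta$ is a morphism in $\cK/B$. Writing $\epsilon\colon|\check{C}(Y)|\to M$ for the augmentation map obtained by realizing the simplicial map $\check{C}(Y)\to M$, the fact that $\theta(m)$ lies over $m$ by construction gives $\epsilon\circ\theta=\id_M$ on the nose.

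The remaining and main task is to produce a fiberwise homotopy $\theta\circ\epsilon\simeq\id_{|\check{C}(Y)|}$, and this is the step I expect to be the main obstacle. A point of $|\check{C}(Y)|$ is represented by a tuple $(y_0,\ldots,y_k)$ lying in a single fiber $Y_m$ together with barycentric coordinates $(t_0,\ldots,t_k)$, and its image under $\theta\circ\epsilon$ is the configuration $(s_i(m))_{i\in I(m)}$ weighted by $(\phi_i(m))_{i\in I(m)}$. Because both configurations lie in the same fiber $Y_m$, I would amalgamate them into the single tuple $(y_0,\ldots,y_k,(s_i(m))_{i\in I(m)})$ in $\check{C}(Y)_{k+|I(m)|}$ and take the straight-line homotopy in the corresponding simplex of the realization that transports the weights from $(t_0,\ldots,t_k,0,\ldots,0)$ to $(0,\ldots,0,(\phi_i(m))_{i\in I(m)})$. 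The delicate points are that this assignment is continuous --- which follows from local finiteness, since near any point only finitely many indices occur and vertices carrying weight zero may be discarded using the face identifications --- and that it respects those face identifications; both are handled exactly as in \cite{Segal-IHES}.

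Finally I would observe that the fiberwise condition is automatic: at a point represented over $m\in M$ the entire homotopy takes place inside the fiber $Y_m$, which maps to a single point of $B$, so the homotopy commutes with the projection to $B$. The only verification genuinely new relative to the classical argument is that amalgamating tuples and interpolating weights preserves the maps to $B$, and this holds precisely because the sections $s_i$ are fiberwise. Together with the identity $\epsilon\circ\theta=\id_M$, this exhibits $\epsilon$ as a fiberwise homotopy equivalence.
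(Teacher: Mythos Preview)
Your proposal is correct and matches the paper's proof almost exactly: both choose a partition of unity subordinate to a cover with local sections, define a section $M\to|\check{C}(Y)|$ by weighting the local sections $s_i(m)$ with barycentric coordinates $\phi_i(m)$, and produce the homotopy to the identity by concatenating the given tuple with the section tuple and linearly interpolating the weights. The only detail the paper makes more explicit is fixing a total order on the index set $I$ (so that the finite tuples $(s_i(m))_{i\in I(m)}$ are unambiguous elements of the ordered fiber products $Y^{[k+1]}$ comprising $\check{C}(Y)$), which you leave implicit in your appeal to Segal.
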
 

%
%

With these preparations out of the way, we can return to the  problem of
defining an inverse for the map $[M,BG]_{\cK_{/B}}\to H^1(M,G)_{\cKB}$. Let $\pi$ be the projection map $P \to M$ of the bundle.  Since $G$ acts
principally on $P$, there exist maps
\[ 
	P\times_M P \to G,\  P\times_M P\times_M P\to G\times G,\ \ldots \ \text{etc}
\] 
which fit together to give a simplicial map $\check{C}(P)\to  \overline{W}G$.  
Observe that there is another simplicial map $\check{C}(\pi^*P)\to WG$ defined in an 
analogous fashion which forms part of 
a pullback diagram 
\[
\begin{xy} 
	(-10,7.5)*+{\check{C}(\pi^*P)}="1"; 
	(-10,-7.5)*+{\check{C}(P)}="2";
	(10,7.5)*+{WG}="3";
	(10,-7.5)*+{\overline{W}G}="4"; 
	{\ar "1";"2"};
	{\ar "1";"3"}; 
	{\ar "2";"4"};
	{\ar "3";"4"};
\end{xy}
\]
in $s\cKB$.  
On taking fiberwise geometric realizations we obtain a map
\begin{equation} 
\label{class map1}
	|\check{C}(P)|\to |\overline{W}G| =: BG
\end{equation} 
in $\cK_{/B}$.  Let $\sigma\colon M\to  |\check{C}(P)|$ denote a homotopy inverse to the map  
$|\check{C}(P)|\to M$ from
Lemma~\ref{segal lemma}.  Composing $\sigma$ with the map~\eqref{class map1}  
gives a map $M\to BG$.  It is clear that this map
respects the relation of concordance (recall that we are identifying
$H^1(M,G)_{\cKB}$  with the set of fiberwise concordance classes using Corollary~\ref{concordance}) to give a map
\begin{equation} 
\label{isom2}
	H^1(M,G)_{\cKB}\to [M,BG]_{\cK_{/B}}.  
\end{equation}
We need to prove that this map is the inverse of the  map~\eqref{isom1}.  We
first examine the composite $H^1(M,G)_{\cKB}\to  [M,BG]_{\cK_{/B}} \to H^1(M,G)_{\cKB}$.  To
show that  this is the identity we need to show that the pullback of $EG\to
BG$ under the map $|\check{C}(P)|\to BG$ (equation~\eqref{class map1} above) is fiberwise isomorphic to $q^*P$
where we use $q$ to  denote
the map $|\check{C}(P)|\to M$.  For it then follows that  the pullback of $EG\to
BG$ under the composite map $M\to  |\check{C}(P)|\to BG$ is isomorphic to
$\sigma^*q^*P\cong P$. Observe that on taking fiberwise geometric 
realizations we obtain the commutative diagram
\[ 
	\xymatrix{ 
		P \ar[d] & |\check{C}(\pi^*P)| \ar[d] \ar[l] \ar[r] & |WG| \ar[d] \\ 
		M & |\check{C}(P)| \ar[l] \ar[r] & |\overline{W}G| 
	} 
\] 
in $\cK_{/B}$ in which each square is a pullback. 
Hence it follows  that
the composite $H^1(M,G)_{\cKB}\to [M,BG]_{\cK_{/B}} \to H^1(M,G)_{\cKB}$ is the identity.

Now we examine the composite map $[M,BG]_{\cK_{/B}}\to H^1(M,G)_{\cKB}\to
[M,BG]_{\cK_{/B}}$.  To prove that this is the identity it is sufficient to prove
the following: in the diagram
\begin{equation} 
\label{fib hom} 
	\begin{xy} 
		(0,7.5)*+{|\check{C}(EG)|}="1"; 
		(0,-7.5)*+{BG}="2"; 
		(30,7.5)*+{|\overline{W}G| = BG}="3"; 
		{\ar "1";"2"}; 
		{\ar "1";"3"};
	\end{xy} 
\end{equation} 
the two maps $|\check{C}(EG)|\to BG$ are fiberwise homotopic.  Here the horizontal map is the fiberwise geometric 
realization of the map~\eqref{class map1} (in the special case of $P = EG$) and the vertical map is the canonical map 
obtained by the augmentation of the \v{C}ech nerve $\check{C}(EG)$ of $EG\to BG$.

The existence of this fiberwise homotopy can be understood 
as a simple fact about the {\em total d\'{e}calage} functor; 
therefore we shall need a short interlude to discuss this latter object.  
Recall (see for example \cite{Duskin Mem, Ste1}) that $\Dec\colon s\cKB\to ss\cKB$ 
is the functor induced by restriction along the ordinal sum functor $\sigma\colon \Delta\times 
\Delta\to \Delta$ defined above.  Thus 
$\Dec\, X$ is the bisimplicial parametrized space whose columns form the simplicial object 
\[
\xymatrix{ 
\Dec_0X \ar@<-1ex>[r] & \ar[l] \ar@<-1ex>[l] \Dec_1 X \ar@<-1ex>[r] \ar@<-2ex>[r] & 
\ar[l] \ar@<-1ex>[l] \ar@<-2ex>[l] \Dec_2 X  \cdots}
\]  
where $\Dec_nX = (\Dec_0)^{n+1}X$.  
In particular the 0-skeleton of $\Dec\, X$ is $\Dec_0 X$.   
Ordinal sum with the empty set defines canonical natural transformations $p_1\to \sigma$ 
and $p_2\to \sigma$, where $p_1,p_2\colon \Delta\times \Delta\to \Delta$ denote 
the projections onto the first and second factors.  Hence the total d\'{e}calage $\Dec\, X$ 
of a simplicial parametrized space $X$ comes equipped with row and column augmentations 
$\Dec\, X\to p_1^*X$ and $\Dec\, X\to p_2^*X$ respectively.  On taking 
diagonals and fiberwise geometric realizations, we obtain a diagram 
\begin{equation}
\label{eq:can diagram for Dec}
\begin{xy} 
		(0,7.5)*+{|d\Dec\, X|}="1"; 
		(0,-7.5)*+{X}="2"; 
		(20,7.5)*+{X}="3"; 
		{\ar_-{q_1} "1";"2"}; 
		{\ar^-{q_2} "1";"3"};
	\end{xy} 
\end{equation}
A further useful property of the total d\'{e}calage is that the fiberwise geometric realization 
$|d\Dec\, X|$ is isomorphic to $X$, for $d\Dec\, X$ is easily seen to be equal to the edge-wise subdivision of 
$X$ as defined in \cite{BokHsiMad}.

We return to the problem at hand.  
Recall, see the remarks following Lemma~\ref{lem:univ ptcp}, that $WG = \Dec_0\overline{W}G$.  
It follows, by an adjointness argument using the fact that $\sk_0 \Dec\, \overline{W}G = WG$, 
that there is a canonical map of bisimplicial parametrized spaces 
\begin{equation}
\label{eq: canonical map from Dec}
\Dec\,  \overline{W}G\to \check{C}(WG) = \cosk_0(WG).  
\end{equation}
This map is easily checked to be an isomorphism and moreover the diagram~\eqref{eq:can diagram for Dec} 
is equal to the diagram~\eqref{fib hom} above with $X = \check{C}(WG)$.  Note that we also obtain the 
not-so-obvious fact that $|\check{C}(EG)|$ is isomorphic to $BG$.  

Thus to prove that the two maps in~\eqref{fib hom} are fiberwise homotopic, it suffices 
to prove that the two maps in~\eqref{eq:can diagram for Dec} are fiberwise homotopic.       
We shall prove that if $X$ is a simplicial parametrized space, then there is a canonical simplicial  
homotopy $d\Dec\, X\otimes \Delta[1]\to X$ from $q_1$ to $q_2$.  
Taking fiberwise geometric realizations then gives the required fiberwise homotopy.  

By adjointness, exhibiting such a simplicial homotopy, is equivalent to exhibiting a map 
$d\Dec\, X\to X^{\Delta[1]}$ 
such that the diagram 
\begin{equation} 
\label{eq:can fiber hty diag}
\begin{xy} 
		(-10,0)*+{d\Dec\, X}="1"; 
		(10,10)*+{X}="2";
		(10,-10)*+{X}="3"; 
		(10,0)*+{X^{\Delta[1]}}="4"; 
		{\ar "1";"4"};
		{\ar_-{q_2} "1";"3"}; 
		{\ar^-{q_1} "1";"2"};
		{\ar "4";"2"};
		{\ar "4";"3"};
	\end{xy} 
\end{equation}
commutes, where $X^{\Delta[1]}$ denotes the usual simplicial path space of $X$, and the two 
projections $X^{\Delta[1]}\to X$ are induced by the inclusions $0,1\colon \Delta[0]\to \Delta[1]$.  
To be more concrete, $X^{\Delta[1]}$ is the simplicial parametrized space whose 
space of $n$-simplices is the 
generalized matching
 object
 \[
 	(X^{\Delta[1]})_n = M_{\Delta[n]\times \Delta[1]}X 
 \] 
 (see VII 1.21 of \cite{GJ}).  It is
 an easy calculation to see that the object of $n$-simplices of
 $d\Dec(X)$ is given by
 \[
 	(d\Dec(X))_n = M_{\Delta[n]\star \Delta[n]}X = M_{\Delta[2n+1]}X, 
 \]
 where $\Delta[n]\star \Delta[n]$ denotes the {\em join} of $\Delta[n]$ with itself~\cite{Ehlers-Porter}. 
 To construct the map $d\Dec\, X\to X^{\Delta[1]}$ it suffices 
 to construct a simplicial map  
 \[
 	\Delta[n]\times \Delta[1]\to \Delta[2n+1], 
 \] 
 natural in $[n]$, such that the diagram 
 \[
 	\xymatrix{ 
		& \Delta[n] \ar[d] \ar[dl]!UR_-{\tilde q_1} \\ 
		\Delta[n]\star \Delta[n] = \Delta[2n+1] &  \ar[l] \Delta[n]\times \Delta[1] \\ 
		& \Delta[n] \ar[u] \ar[ul]!DR^-{\tilde q_2} \\
		}
\]
 commutes, where the two maps $\tilde q_1,\tilde q_2\colon \Delta[n]\to \Delta[2n+1]$ are induced by
 $\sigma([n], \emptyset)\to \sigma([n], [n])$ and $\sigma(\emptyset, [n])\to \sigma([n],
 [n])$  respectively.  The required homotopy is the nerve of the canonical natural 
 transformation $\alpha\colon \tilde q_1\to \tilde q_2$ defined by $\alpha(i)\colon i\to n+i+1$.  It 
 is easy to see that this map is natural in $n$ in the appropriate sense.       
This finishes the proof of Theorem~\ref{classifying thy}.
\end{proof}

\section{Proof of Theorem~\ref{main result}} 
\label{app: proof of main result}

In this Section, we prove Theorem~\ref{main result}.  First recall the statement of 
this theorem.  

\mainresult*

The proof of Theorem~\ref{main result} is a variation on the approach of  the papers
\cite{May-Classifying, McCord, Steenrod} (which deal with the  case where $G$
is a constant simplicial group) to the case  where $G$ is an arbitrary group
in $s\cKB$.   We note that an important ingredient in \cite{May-Classifying, McCord,
Steenrod} is the notion of an  equivariant NDR pair, a notion which we have
already explained  (see Section~\ref{sec:param_gp} above)  has a
straightforward generalization to the parametrized setting.

\begin{proof}[Proof of Theorem~\ref{main result}]

Let $n\geq 0$ be an integer.   Recall that the $n^{\text{th}}$ skeleton $\sk_n
M$ of $M$ comes  equipped with a map $\sk_n M\to M$ and that there are natural
maps $\sk_n M\to \sk_m M$ whenever $m\leq n$.   Recall also that $M = \colim_n
\sk_n M$ and that there is a pushout diagram of the form 
\begin{equation} 
\label{eq:skeletal pushout}
	\begin{xy}
		(-25,7.5)*+{(M_n\otimes \partial \Delta[n])\cup (L_n M\otimes \Delta[n])}="1";  
		(-25,-7.5)*+{M_n\otimes \Delta[n]}="2";
		(25,7.5)*+{\sk_{n-1} M}="3"; 
		(25,-7.5)*+{ \sk_n M}="4";
		{\ar "1";"3"}; 
		{\ar "1";"2"};
		{\ar "2";"4"}; 
		{\ar "3";"4"};
	\end{xy}
\end{equation}
(see for instance Proposition VII 1.7 of \cite{GJ}), where $\Delta[n]$ denotes the simplicial 
$n$-simplex and $\partial\Delta[n]$ denotes its boundary.

We use the $n^{\text{th}}$ skeletons of $M$ to define a filtration
$|P|_0\subset |P|_1 \subset \cdots \subset |P|_n\subset \cdots \subset |P|$ of
$|P|$ as follows.   The canonical maps $\sk_n M\to M$ induce by pullback   simplicial
principal bundles with structure group $G$ on each of the simplicial spaces
$\sk_n M$.   Let $|P|_n =  |\sk_n M\times_M P|$.  Observe that $|P|_n\subset |P|_{n+1}$ 
and $|P|_n\subset |P|$ are closed inclusions for all $n\geq 0$.  For convenience of notation
we will also denote $|\sk_n M|$ by $|M|_n$, but note the potential  confusion with 
$|P|_n$: we remind the reader that this does \emph{not} denote the geometric realization of 
the $n$-skeleton of $P$.  Recall that $M = \colim_n \sk_n M$ and hence $|M| =  \colim_n
|M|_n$ in $\cKB$.  We claim that $P = \colim_n (\sk_nM\times_M P)$.  This is
easy to see in the special case that $P$ is trivial.  We can reduce the
general statement  to this special case, since $P$ is a colimit of trivial
bundles and colimits commute amongst themselves.

The map $|P|\to |M|$ is a quotient map, since the map $\sqcup_{n\geq 0}
P_n\times \Delta^n\to  \sqcup_{n\geq 0} M_n\times \Delta^n$ is a quotient
map, and both of the maps $\sqcup_{n\geq 0} P_n\times  \Delta^n\to |P|$ and
$\sqcup_{n\geq 0} M_n\times \Delta^n\to |M|$ are quotient maps.  Since the
diagram
\[
	\xymatrix{ 
		|P|_n \ar[r] \ar[d] & |P| \ar[d] \\ 
		|M|_n \ar[r] & |M| 
	} 
\]
is a pullback, we see that $|P|_n\to |M|_n$ is also a quotient map
($|M|_n\to |M|$ is a closed inclusion, and quotient maps pullback along closed
inclusions to  quotient maps).   In particular $|M|_n$ has the quotient
topology induced by the map $|\pi|\colon |P|_n\to |M|_n$.

The main step in our proof is to prove that $(|P|_n,|P|_{n-1})$ is a
$|G|$-fiberwise NDR pair in $\cKB$ for all $n\geq 1$, so that we  can apply
the method of \cite{May-Classifying, McCord, Steenrod}.  As a first step in
this direction we have the following lemma.

\begin{lemma}  
\label{pushout lemma for realization}

	For every $n\geq 1$ we have a pushout diagram in $s\cKB$ of the form

	\begin{equation}
	\label{first pushout}
		\begin{xy}
		(-25,7.5)*+{((M_n\otimes \partial \Delta[n])\cup (L_n M\otimes \Delta[n]))\times_M P}="1";  
		(-25,-7.5)*+{(M_n\otimes \Delta[n])\times_M P}="2";
		(25,7.5)*+{\sk_{n-1} M\times_M  P}="3"; 
		(25,-7.5)*+{ \sk_n M\times_M P}="4";
		{\ar "1";"3"}; 
		{\ar "1";"2"};
		{\ar "2";"4"}; 
		{\ar "3";"4"};
		\end{xy}
	\end{equation}
	
\end{lemma}

\begin{proof}

Observe that the canonical map from the pushout to $\sk_nM\times_M P$ is a
continuous bijection  in each degree.  Therefore it suffices to show that for
each $m\geq 0$ the induced map
\begin{equation}
\label{coproduct of bundles}
	((M_n\otimes \Delta[n])_m\times_{M_m}P_m)\sqcup ((\sk_{n-1}M)_m\times_{M_m}P_m)\to 
	(\sk_nM)_m\times_{M_m}P_m 
\end{equation}
is a quotient map.  The map~\eqref{coproduct of bundles} is the map of
fiberwise principal bundles  induced by pullback along the quotient map
\[
	(M_n\otimes \Delta[n])_m\sqcup (\sk_{n-1}M)_m\to (\sk_nM)_m.  
\]
Therefore to prove the lemma it suffices to establish the  following claim: if
$P\to M$ is a fiberwise principal bundle and  $f\colon N\to M$ is a quotient
map in $\cKB$, then  $f^*P\to P$ is also a quotient map.   To see this
observe that since $P$ can be constructed  as a quotient of a coproduct of
spaces of the  form $U\times_B G$, and $f^*P$ can be constructed as a
quotient of a coproduct of spaces of the form $f^{-1}U\times_B G$, it suffices
to  prove that $f^{-1}U\times_B G\to U\times_B G$ is a quotient  map for any
open set $U\subset M$.  Since the functor $(-)\times_B G$ preserves  colimits
this follows from the fact that $f^{-1}U\to U$ is a quotient map, since
$U\subset M$ is open.
\end{proof} 

Continuing the proof of Theorem~\ref{main result}, the second step is to show that in the 
diagram~\eqref{first pushout} the realization of the left hand vertical map is an $\bar{f}$-cofibration 
in $|G|\cKB$.  For this we will need the hypotheses that 
each $P_n\to M_n$ is a numerable principal $G_n$ bundle, and that $M$ is proper.  

\begin{lemma}
\label{lem:second lemma}

	For every $n\geq 1$, the map 
	\begin{equation}
	\label{eq: pullback of inclusion}
		|(M_n\otimes \partial\Delta[n])\cup (L_nM\otimes \Delta[n])\times_M P|\to 
		|M_n\otimes \Delta[n]\times_M P| 
	\end{equation}
	is an $\bar{f}$-cofibration in $|G|\cKB$ and 
	hence $(|P|_n,|P|_{n-1})$ is
	a $|G|$-fiberwise NDR  pair in $\cKB$ for all $n\geq 1$.

\end{lemma} 

\begin{proof}
Using the fact that geometric realization commutes with pullbacks, we obtain a pullback diagram 
\[
	\xymatrix{ 
		|((M_n\otimes \partial \Delta[n])\cup (L_nM\otimes \Delta[n]))\times_M P| \ar[d] \ar[r] & 
		|M_n\otimes \Delta[n]\times_M P| \ar[d] \\ 
		(M_n\times \partial\Delta^n)\cup (L_nM\times \Delta^n) \ar[r] & 
		M_n\times \Delta^n.
	}
\]
Since $M$ is proper, the closed inclusion $L_nM\subset M_n$ is an
$\bar{f}$-cofibration and  standard results show that this induces 
a closed inclusion   
$(M_n\times \partial\Delta^n)\cup (L_nM\times
\Delta^n)\to M_n\times \Delta^n$ which is also an $\bar{f}$-cofibration.  Therefore 
if we can show that
\[
	|M_n\otimes \Delta[n]\times_M P| \to M_n\times \Delta^n 
\]
is a numerable fiberwise principal $|G|$ bundle in $\cKB$, then we may use
Proposition~\ref{Stroms theorem} to deduce that the closed
inclusion~\eqref{eq: pullback of inclusion}  is an $\bar{f}$-cofibration in
$\cKB$.  It then follows from Lemma~\ref{pushout lemma for realization}  that
$|P|_{n-1}\to |P|_n$ is an $\bar{f}$-cofibration, since these are  preserved
under pushout.  Finally, it follows from Lemma~\ref{ndr} that
$(|P|_n,|P|_{n-1})$ is a $|G|$-fiberwise NDR pair.

Since we have shown that $|P| \to |M|$ satisfies the condition (ii) of
Definition~\ref{def:fiberwise bundle} for the group $|G|$, and this
condition is stable under pullback, $|M_n\otimes \Delta[n]\times_M P|
\to M_n\times \Delta^n$ also satisfies the condition (ii). We thus only
need to show that this map satisfies the condition (i). That is, it admits
local sections relative to a numerable open cover of $M_n\times \Delta^n$.  For this, consider the
commutative diagram
\[
	\xymatrix{ 
		P_n\times \Delta^n \ar[r] \ar[d] & |P| \ar[d] \\ 
		M_n\times \Delta^n \ar[r] & |M| 
	} 
\]
where the horizontal maps are the canonical ones into the colimits defining
$|P|$ and $|M|$.  The map  $P_n\times \Delta^n\to |P|$ factors through
$|M_n\otimes \Delta[n]\times_M P|$ and hence $|M_n\otimes \Delta[n]\times_M
P|\to M_n\times \Delta^n$  admits local sections relative to a numerable open
cover of $M_n\times \Delta^n$ since the principal $G_n$ bundle $P_n\times
\Delta^n\to M_n\times \Delta^n$ does by  hypothesis.
\end{proof}

We now proceed in our proof of Theorem~\ref{main result} in analogy with the arguments in \cite{May-Classifying, McCord,
Steenrod}.   Since $(|P|_n,|P|_{n-1})$ is a fiberwise $|G|$-equivariant NDR
pair for every $n\geq 1$ and $|P| = \colim_n |P|_n$, we see  that
$(|P|,|P|_n)$ is a fiberwise $|G|$-equivariant NDR pair for every $n\geq 0$ (by Lemma~\ref{ndr} and Lemma~\ref{seq colimit lemma}).   For any $n\geq 0$ let $h_n\colon
|P|\times I\to |P|$ and  $u_n\colon |P|\to I$ be a representation of
$(|P|,|P|_n)$ as a fiberwise $|G|$-equivariant NDR pair.   Define functions
$\hat{\rho}_n\colon |P|\to I$ for every $n\geq 1$ by
\[ 
	\hat{\rho}_n(x) = (1 - u_n(x))u_{n-1}(h_n(x,1)).   
\] 

The functions $\hat{\rho}_n$ are easily seen to be $|G|$-invariant and hence
descend to functions $\rho_n\colon |M|\to I$.   Let $U_n =
\hat{\rho}_n^{-1}(0,1]$ and let $V_n = \rho_n^{-1}(0,1]$ so that  $U_n =
|\pi|^{-1}V_n$ (and hence $U_n$ is $|G|$-invariant).       Following \cite
{May-Classifying, McCord} let $r_n\colon |P|\to |P|$ denote the map
$r_n(|x,t|) = h_n(|x,t|,1)$.   Then we have (see \cite{May-Classifying,
McCord}) the following chain of inclusions
\[ 
	|P|_n \setminus |P|_{n-1} \subset U_n \subset r_n^{-1}(|P|_n \setminus |P|_{n-1}) 
\] 
Observe that we have a commutative diagram 
\begin{equation}
\label{comm diagram}
	\begin{xy} 
	(-25,7.5)*+{U_n}="1";
	(0,7.5)*+{|P|_n\setminus|P|_{n-1}}="2";
	(35,7.5)*+{|M_n\otimes \Delta[n]\times_M P|}="3"; 
	(-25,-7.5)*+{V_n}="4"; 
	(0,-7.5)*+{|M|_n\setminus|M|_{n-1}}="5"; 
	(35,-7.5)*+{|M_n\otimes \Delta[n]|}="6";
	{\ar_-{|\pi|} "1";"4"};
	{\ar^-{r_n} "1";"2"};
	{\ar "2";"3"}; 
	{\ar "3";"6"}; 
	{\ar "2";"5"};
	{\ar "4";"5"}; 
	{\ar "5";"6"};
	\end{xy}
\end{equation}
in which the top horizontal maps are $|G|$-equivariant.   The lower right hand map in 
this diagram arises as follows: after taking geometric realizations in~\eqref{eq:skeletal pushout}, 
we see that there is an isomorphism 
\[
|M|_n\setminus |M|_{n-1} = |M_n\otimes \Delta[n]|\setminus 
|(M_n\otimes \partial \Delta[n])\cup (L_n M\otimes \Delta[n])| 
\]
and hence a natural inclusion $|M|_n\setminus |M|_{n-1}\subset |M_n\otimes \Delta[n]|$.  

After the previous
Lemma~\ref{lem:second lemma} we observed that  $|M_n\otimes \Delta[n]\times_M
P|\to |M_n\otimes \Delta[n]|$ is a numerable  fiberwise principal $|G|$-bundle
in $\cKB$ and hence is locally trivial.  Using  local sections of this map,
we can find an open cover $(V_{n,i})$ of $V_n$ and  
$|G|$-equivariant maps $\zeta_{n,i}\colon U_{n,i}\to |G|$, where $U_{n,i} = |\pi|^{-1}V_{n,i}$.
Then we can define $|G|$-invariant maps  $\hat{\sigma}_{n,i}\colon U_{n,i}\to
U_{n,i}$ by $\hat{\sigma}_{n,i}(x) = x\zeta_{n,i}(x)^{-1}$.   Since
$\hat{\sigma}_{n,i}$ is  $|G|$-invariant, it descends to define a unique map
$\sigma_{n,i}\colon V_{n,i}\to U_{n,i}$ so that the diagram
\[ 
	\xymatrix{ 
		U_{n,i} \ar[d]_-{|\pi|} \ar[r]^-{\hat{\sigma}_{n,i}} & U_{n,i} \\
		V_{n,i} \ar[ur]_-{\sigma_{n,i}} 
	} 
\] 
commutes. The set $V_{n,i}$ has the quotient topology induced by $|\pi|$  and hence
$\sigma_{n,i}$ is continuous.  Clearly $\sigma_{n,i}$ is a section of $|\pi|$.
Thus we have proven that there exist trivializations of $|\pi|\colon |P|\to
|M|$ over the open subsets $V_{n,i}$.

It remains to prove the statement regarding the numerability of the bundle 
$|P|\to |M|$.  We argue as follows.  
From the proof above we obtain the commutative diagram~\eqref{comm diagram}.
In this case the bundle $|M_n\otimes \Delta[n]\times_M P|\to |M_n\otimes
\Delta[n]|$ is  trivial, and therefore we can define $|G|$-equivariant maps
$\zeta_n\colon U_n\to |G|$.   In exactly the same way as above we can use the
maps $\zeta_n$ to define  $|G|$-invariant maps $\hat{\sigma}_n\colon U_n\to
U_n$ which descend to sections  $\sigma_n\colon V_n\to U_n$ of $|\pi|$.  The
problem now is to show that  the open cover $(V_n)$ is numerable. To do this
we use the functions $\rho_n\colon V_n\to I$ constructed earlier.    The
collection of functions  $(\rho_n)$ may not be locally finite, this can be
fixed however using the method of Dold \cite[Proof of Proposition 6.7]{Dold}; one defines new  functions
$\phi_n\colon U_n\to I$ with $\supp (\phi_n)\subset U_n$ by
\[ 
	\phi_n(x) = \max\left( 0,\rho_n(x) - n\sum^{n-1}_{i=1}\rho_i(x)\right). 
\]
Then one can check as in \cite{Dold} that the collection of functions
$(\phi_n)$ is locally finite.  It is now  clear how to form a partition of
unity from the $\phi_n$. 
This ends the proof of Theorem~\ref{main result}.
\end{proof}

\section{Proof of Proposition~\ref{well pointed simp grp}} 
\label{app:proof of propn 3}

Recall the statement of Proposition~\ref{well pointed simp grp}.  

\wellpointedprop*

\begin{proof}
We prove statement (1).  We need to show that $s_i\colon G_n\to G_{n+1}$ is an  $\bar{f}$-cofibration
for all $0\leq i\leq n$ and all $n\geq 0$.   Since $s_i$ is a section of the
corresponding  face operator $d_i$, we can identify $s_i$ with the map $G_n\to
G_n\times_B \ker(d_i)$  which sends $g\mapsto (g,1)$.  Therefore, by Lemma
\ref{products of cofibrations} below, to prove that $s_i$ is an
$\bar{f}$-cofibration  it is sufficient to prove that  $\ker(d_i)$ is well
sectioned.
For this, we observe that $\ker(d_i)$  is a retract of $G_{n+1}$ by the map
$G_{n+1}\to \ker(d_i)$  sending $g$ to $gs_id_i(g)^{-1}$.  Therefore the
section $B\to \ker(d_i)$ is an $\bar{f}$-cofibration since it is a  retract of
the map $B\to G_{n+1}$ which is an $\bar{f}$-cofibration by hypothesis.

We prove statement (2).  From what we have just proved, we have that each degeneracy map of $G$ is an 
$\bar{f}$-cofibration.  Lemma~\ref{products of cofibrations} below implies that 
the degeneracies of $\overline{W}G$ are $\bar{f}$-cofibrations and hence 
Proposition~\ref{param lillig} in Appendix~\ref{app:good implies proper} implies that $\overline{W}G$ is proper.

Finally we prove statement (3).  Since $G$ is well-sectioned, the simplicial object $G$ is proper, and hence
the inclusion $|G|_n\subset |G|_{n+1}$ 
is an $\bar{f}$-cofibration for all
$n\geq 0$ (with the notation of the proof of Theorem~\ref{main result}).  
This follows from the fact that $|G|_n\subset |G|_{n+1}$ is a pushout 
of $|G_n\otimes \partial\Delta[n])\cup (L_n G\otimes \Delta[n])|\to |G_n\otimes \Delta[n]|$, 
which is an $\bar{f}$-cofibration using Proposition~\ref{well pointed simp grp} 
and the fact that $\cKB$ is a topological model category.  
Therefore the inclusion $|G|_n\subset |G|$ is an 
$\bar{f}$-cofibration for
all $n\geq 0$ (by the non-equivariant version of Lemma~\ref{seq colimit lemma}).  
Since $|G|_0$ is well-sectioned and the composite of two
$\bar{f}$-cofibrations is an $\bar{f}$-cofibration, it follows that $|G|$ is
well-sectioned.    
\end{proof}

To complete the proof of Proposition~\ref{well pointed simp grp} we need to give the proof of the following lemma.  

\begin{lemma}
\label{products of cofibrations}

	Suppose that $A_1\to X$ and $A_2\to Y$ are $\bar{f}$-cofibrations in
	$\cKB$.  Then  $A_1\times_B A_2\to X\times_B Y$ is also an
	$\bar{f}$-cofibration.

\end{lemma}

\begin{proof}
It is clearly sufficient to prove that if  $A\to X$ is an
$\bar{f}$-cofibration and $Y$ is any space over $B$, then  $A\times_B Y \to
X\times_B Y$ is an $\bar{f}$-cofibration, in other words it has  the LLP with
respect to all $f$-acyclic $f$-fibrations $U\to V$.  By adjointness,  this is
equivalent to checking that $A\to X$ has the LLP against all maps of  the form
$\Map_B(Y,U)\to \Map_B(Y,V)$ where $U\to V$ is an $f$-acyclic $f$-fibration.

By an adjointness argument, the functor $\Map_B(Y,-)\colon \cKB\to \cKB$
preserves  $f$-fibrations.  It also preserves fiberwise homotopies: if
$g_0,g_1\colon X\to Z$ are  fiberwise homotopic through a fiberwise homotopy
$h\colon X\times I\to Z$, then  the maps $\Map_B(Y,g_0)$ and $\Map_B(Y,g_1)$
are fiberwise homotopic through  the fiberwise homotopy $\tilde{h}\colon
\Map_B(Y,X)\times I\to \Map_B(Y,Z)$ defined as  the composite
\begin{equation}
\label{Map_B fiberwise homotopy}
	\Map_B(Y,X)\times I\to \Map_B(Y,X\times I)\xrightarrow{\Map_B(Y,h)}
	\Map_B(Y,Z), 
\end{equation}
where the first map is the adjoint of the canonical map  $Y\times_B
\Map_B(Y,X)\times I\to X\times I$.  One can check that  $\tilde{h}$ so
defined   does give such a fiberwise homotopy as claimed.  It follows that the
functor $\Map_B(Y,-)$  preserves $f$-equivalences, and hence $f$-acyclic
$f$-fibrations, which proves the lemma.
\end{proof}

\begin{appendices}

\section{Good implies proper} 
\label{app:good implies proper}

Our goal in this section is to prove that a good simplicial object $X$  in a topological bicomplete 
category $\cC$ is automatically proper, provided that a generalization of Lillig's union 
theorem on cofibrations \cite{Lillig} holds in $\cC$, and an assumption on colimits 
in the slice categories $\cC_{/X_n}$ is met.  We begin by making the following definition.  

\begin{definition}
\label{lillig}

	Let $\cC$ be a topological bicomplete category.  We say that $\cC$
	satisfies the  {\em Lillig condition} if the following is true: 
	Given a pullback diagram in $\cC$,
	\[
		\xymatrix{ 
			A_3\ar[r] \ar[d] & A_2 \ar[d] \\ 
			A_1 \ar[r] & X 
		} 
	\] 
	such that the morphisms $A_1\to X$, $A_3\to X$ and $A_2\to X$ are
	$\bar{h}$-cofibrations, then the canonical map $A_1\cup_{A_3} A_2\to X$
	is an $\bar{h}$-cofibration.

\end{definition}

When $\cC = \cK$ this is Lillig's union theorem \cite{Lillig}.
We will prove shortly  that a reworking of the proof in \cite{Lillig} shows
that the Lillig condition holds  when $\cC = \cKB$; we do not know if this
condition holds more generally.

With this definition understood we can turn to our main goal in this appendix, which is 
the proof of the following proposition.  

\begin{proposition} \label{prop:good_implies_proper}

	Let $\cC$ be a topological bicomplete category and let $X$ be a 
	good simplicial object in $\cC$.  Suppose that the following two conditions are satisfied: 
	\begin{enumerate}
	\item $\cC$ satisfies the Lillig condition of Definition~\ref{lillig}, 

	\item $s_k\colon X_n\to X_{n+1}$ is properly extensive for all $n\geq 0 $ and all $0\leq k \leq n$.  
	\end{enumerate}
	Then $X$ is proper.

\end{proposition} 

Here we say that a map $f\colon X\to Y$ in $\cC$ is {\em properly extensive} 
if the pullback functors $f^*\colon \cC_{/Y}\to \cC_{/X}$ commutes with finite colimits.    
The proof of Proposition~\ref{prop:good_implies_proper} that we shall give is 
based on the proof of Corollary 2.4 (b) of \cite{GaunceLewis}.  We begin with 
some preparation.  

Recall (Definition~\ref{def:proper simplicial obj}) that a proper simplicial object $X$ in a topological bicomplete
category $\cC$ is one for  which the latching maps $L_nX\to X_n$ are
$\bar{h}$-cofibrations for all $n\geq 0$.        We need to examine the notion
of latching object in a little more detail.   
Recall (see for example Remark VII 1.8 of \cite{GJ}), that $L_nX$ may also be described as the coequalizer 
\begin{equation}
\label{coeq latching}
	\bigsqcup_{0\leq i<j\leq n-2} X_{n-2} \rightrightarrows \bigsqcup_{0\leq l\leq n-1} X_{n-1} \to L_nX 
\end{equation}
where the two maps defining the coequalizer arise from the simplicial identity
$s_is_{j-1} = s_{j}s_i$ if $i<j$ (see for example V Lemma 1.1 and VII Remark
1.8 of \cite{GJ}). 
It is well known that $L_0X = \emptyset$, $L_1X = X_0$ and
$L_2X = X_1\cup_{X_0} X_1$. 

It will be convenient to introduce a family of {\em partial latching objects} 
$L_{n,k}X$ associated to the simplicial object $X$ for $k=0,1,\ldots,n$.  
For $0\leq k\leq n$ we define $L_{n,k}X$ by the coequalizer 
\[
\bigsqcup_{0\leq i<j\leq k-1} X_{n-2}\rightrightarrows 
\bigsqcup^{k-1}_{l=0}X_{n-2} \to L_{n,k}X 
\]
where the restrictions of the two displayed maps to the summand labelled by the pair
$(i,j)$ are given by the composites 
\begin{align*} 
& X_{n-2} \xrightarrow{s_i} X_{n-1} \xrightarrow{\mathrm{in}_j} \bigsqcup^{k-1}_{l=0}X_{n-1} \\ 
& X_{n-2} \xrightarrow{s_j} X_{n-1} \xrightarrow{\mathrm{in}_i}\bigsqcup^{k-1}_{l=0}X_{n-1} 
\end{align*}
and where $\mathrm{in}_i,\mathrm{in}_j$ denote the inclusions into the summands labelled by 
$i$ and $j$.  Note that there are isomorphisms $L_{n,0}X\simeq \emptyset$, $L_{n,n}X\simeq L_nX$.  
Note also that there is a canonical map $L_{n,k}X\to X_n$ induced by the degeneracies 
$s_i\colon X_{n-1}\to X_n$ for $0\leq i\leq k-1$.    
These partial latching objects are precisely the objects $L_{n,k}X$ defined on pages 
362--363 of \cite{GJ}.  We have the following result: 

\begin{lemma}[\cite{GJ}, chapter VII Proposition 1.27] 
Let $X$ be a simplicial object in $\cC$.  Then for any $0\leq k\leq n-1$ there is a pushout diagram 
\[
\xymatrix{ 
L_{n-1,k}X \ar[r] \ar[d] & X_{n-1} \ar[d] \\ 
L_{n,k}X \ar[r] & L_{n,k+1}X } 
\]
\end{lemma} 

\begin{proof} 
The lemma follows from the statements (i)--(iii) below, together with the fact that colimits commute amongst themselves.  

\bigskip 

\noindent
(i) the diagram 
\[
\xymatrix{ 
\displaystyle{\bigsqcup_{0\leq i<j\leq k-1}} X_{n-3} \ar[r] \ar[d]_-{s_k} & 
\displaystyle{\bigsqcup_{0\leq i<j\leq k-1}}X_{n-3}\sqcup \displaystyle{\bigsqcup^{k-1}_{l=0}}X_{n-2} \ar[d] \\ 
\displaystyle{\bigsqcup_{0\leq i<j\leq k-1}}X_{n-2} \ar[r] & \displaystyle{\bigsqcup_{0\leq i<j\leq k}} X_{n-2} } 
\]
is a pushout; 

\bigskip 

\noindent 
(ii) the diagram 
\[
\xymatrix{ 
\displaystyle{\bigsqcup^{k-1}_{l=0}} X_{n-2} \ar[d]_-{s_{k+1}} \ar[r] & \displaystyle{\bigsqcup^{k-1}_{l=0}}X_{n-2}\sqcup X_{n-1} \ar[d] \\ 
\displaystyle{\bigsqcup^{k-1}_{l=0}}X_{n-1}\ar[r] & \displaystyle{\bigsqcup^k_{l=0}}X_{n-1} } 
\]
is a pushout; 

\bigskip 

\noindent 
(iii) the diagram 
\[
\bigsqcup_{0\leq i<j\leq k-1}X_{n-3}\sqcup \bigsqcup^{k-1}_{l=0}X_{n-2} \rightrightarrows 
\bigsqcup^{k-1}_{l=0}X_{n-2}\sqcup X_{n-1} \rightarrow X_{n-1} 
\]
is a coequalizer, where the two displayed maps are defined to be the corresponding maps in the coequalizer defining 
$L_{n-1,k}X$ on the first summand $\bigsqcup_{0\leq i<j\leq k-1}X_{n-3}$, and are defined to be the composites 
\begin{align*} 
& X_{n-2} \xrightarrow{\mathrm{in}_i} \bigsqcup^{k-1}_{l=0}X_{n-2} \to \bigsqcup^{k-1}_{l=0}X_{n-2}\sqcup X_{n-1} \\ 
& X_{n-2} \xrightarrow{s_i} X_{n-1} \to \bigsqcup^{k-1}_{l=0}X_{n-2}\sqcup X_{n-1}
\end{align*} 
on the summand $X_{n-2}$ labelled by $i$ in $\bigsqcup^{k-1}_{i=0}X_{n-2}$ (in this case it is straightforward to check 
that the universal property for a coequalizer is satisfied).  
\end{proof} 

Next, we need a lemma asserting that under certain hypotheses on colimits in $\cC$, 
a canonical square built out of the partial latching objects is a pullback square.  

\begin{lemma} 
\label{lem:degeneracy pullback}
Suppose that $s_k\colon X_n\to X_{n+1}$ is properly extensive for all $n\geq 0$ 
and for all $0\leq k \leq n$.  
Then for every $0\leq k\leq n$ the diagram 
\[
\xymatrix{ 
L_{n,k}X \ar[r] \ar[d] & X_{n} \ar[d]^-{s_k} \\ 
L_{n+1,k}X \ar[r] & X_{n+1} } 
\]
is a pullback.  
\end{lemma} 

\begin{proof} 
Under the hypothesis in the statement of the lemma, we have a coequalizer diagram 
\[
X_n\times_{X_{n+1}}L_{n+1,k}X \longrightarrow 
\bigsqcup_{0\leq i<j\leq k-1} X_{n-1}\times_{X_{n+1}}X_{n-1} \rightrightarrows 
\bigsqcup^{k-1}_{l=0}X_n\times_{X_{n+1}}X_n 
\]
The result then follows from the well-known fact that the diagrams
\[ 
	\xymatrix{ 
		X_{n-1} \ar[r]^-{s_i} \ar[d]_-{s_{j-1}} & X_n\ar[d]^-{s_j} \\ 
		X_n \ar[r]_-{s_i} & X_{n+1} 
	} 
\]
are pullbacks for $i<j$
\end{proof} 
  
We can now give the proof of Proposition~\ref{prop:good_implies_proper}.  

\begin{proof}[Proof of Proposition~\ref{prop:good_implies_proper}] 
We will prove by induction on $n\geq 0$ that the maps $L_{n,k}X\to X_n$ are 
$\bar{h}$-cofibrations for all $0\leq k\leq n$.  The base case is the statement that 
$L_{0,0}X\to X_0$ is an $\bar{h}$-cofibration.  But $L_{0,0}X = \emptyset$ and 
hence the statement is true in this case, since every object of $\cC$ is $\bar{h}$-cofibrant.    

Now we make the inductive assumption that the maps $L_{n-1,k}X\to X_{n-1}$ 
are $\bar{h}$-cofibrations for all $0\leq k\leq n-1$.  We will prove by induction on 
$k$ that $L_{n,k}X\to X_n$ is an $\bar{h}$-cofibration for all $0\leq k\leq n$.  

To start the induction, we again observe that $L_{n,0}X = \emptyset$ and hence $L_{n,0}X\to X_n$ 
is an $\bar{h}$-cofibration.    
Assume then that $L_{n,k}X\to X_n$ is an $\bar{h}$-cofibration for $k\geq 0$ 
and consider the diagram
\begin{equation}
\label{eq:pushout from GJ}
\begin{xy}
	(-12.5,7.5)*+{L_{n-1,k}X}="1"; 
	(-12.5,-7.5)*+{L_{n,k}X}="2"; 
	(12.5,7.5)*+{X_{n-1}}="3"; 
	(12.5,-7.5)*+{L_{n,k+1}X}="4"; 
	(30,-20)*+{X_n}="5"; 
	{\ar "1";"2"}; 
	{\ar "1";"3"}; 
	{\ar "2";"4"}; 
	{\ar "2";"5"}; 
	{\ar^-{s_k} "3";"5"};
	{\ar "4";"5"};
	{\ar "3";"4"};
\end{xy}
\end{equation}
for $0\leq k\leq n-1$. Since the inner square in~\eqref{eq:pushout from GJ} is a pushout, it follows from the assumption that 
$L_{n-1,k}X\to X_{n-1}$ is an $\bar{h}$-cofibration for all $0\leq k\leq n-1$ that 
$L_{n,k}X\to L_{n,k+1}X$ is an $\bar{h}$-cofibration.  By hypothesis, $L_{n,k}X\to X_n$ is 
an $\bar{h}$-cofibration and $s_k\colon X_{n-1}\to X_n$ is an $\bar{h}$-cofibration since 
$X$ is good.  By Lemma~\ref{lem:degeneracy pullback} the outer square in~\eqref{eq:pushout from GJ} is a 
pullback.  Therefore, since $\cC$ satisfies the Lillig condition of Definition~\ref{lillig} 
we conclude that $L_{n,k+1}X\to X_n$ is an $\bar{h}$-cofibration, completing the inductive 
step.  Therefore $L_{n,n}X\to X_n$ is an $\bar{h}$-cofibration, i.e.\ $L_nX\to X_n$ is 
an $\bar{h}$-cofibration, completing the original inductive step.  Hence $X$ is proper.  
\end{proof} 

As an application, we prove the following result, which we need in the proof of
Proposition~\ref{well pointed simp grp} above.

\begin{proposition} 
\label{param lillig}

	Let $\cC = \cKB$.  Then any good simplicial
	object in $\cKB$ is proper. 

\end{proposition} 

\begin{proof}

We deal with condition 2 first. We need to know that the functor $s_n^*\colon
(\cKB)_{/X_{n+1}}\to (\cKB)_{/X_n}$, i.e.\ restriction along the closed inclusion
$s_n\colon X_n\to X_{n+1}$, preserves finite colimits.  In other words, since
$(\cKB)_{/X}\cong \cK_{/X}$ for any object $X$ in $\cKB$, we have to show that
$s_n^*\colon \cK_{/X_{n+1}}\to \cK_{/X_n}$ preserves finite colimits.

A colimit in $\cK_{/X_{n+1}}$ is constructed as a quotient of a coproduct in
$\cK$ and then equipped with the canonical map to $X_{n+1}$.  Therefore it is
sufficient to prove two things: firstly that restriction along  $X_n$
preserves coproducts in $\cK_{/X_{n+1}}$ and secondly that if $q\colon Y\to Z$ is
a quotient map in $\cK_{/X_{n+1}}$ then in the pullback diagram
\[
	\xymatrix{
		X_n\times_{X_{n+1}}Y \ar[d] \ar[r] & Y \ar[d]^-q  \\
		X_n\times_{X_{n+1}}Z \ar[r] & Z
	}
\] 
in $\cK$ the map $X_n\times_{X_{n+1}}Y\to X_n\times_{X_{n+1}}Z$ is a quotient
map.   The first of these things is easy to prove, for the second it is enough
to prove that $X_n\times_{X_{n+1}}Z\to Z$ is a closed  inclusion, since
quotient maps restrict to quotient maps along closed subspaces.  This is clear
however, since  $s_n\colon X_n\to X_{n+1}$ is a closed inclusion, and closed
inclusions pull back along arbitrary maps to closed inclusions.

For the Lillig condition, suppose that
\[ 
	\xymatrix{ 
		A_3 \ar[r] \ar[d] & A_2 \ar[d] \\ 
		A_1 \ar[r] & X 
	} 
\] 
is a pullback diagram in $\cKB$ as in Definition~\ref{lillig} above, i.e.\
the maps $A_1\to X$, $A_2\to X$ and $A_3\to X$ are $\bar{f}$-cofibrations.
From the pushout-product theorem (see \cite{SV}) it follows that
\begin{equation} 
\label{pushprodmap} 
	A_1\cup_{A_3} A_3\otimes I\cup_{A_3} A_2\to X\otimes I 
\end{equation} 
is an $\bar{f}$-cofibration.  This map fits into the commutative diagram 
\[ 
	\xymatrix{ 
		A_1\cup_{A_3} A_3\otimes I\cup_{A_3} A_2 \ar[r] \ar[d] & A_1\cup_{A_3} A_2 \ar[d] \\ 
		X\otimes I \ar[r] & X
	} 
\] 
The pushout of~\eqref{pushprodmap} along $A_1\cup_{A_3} A_3\otimes I\cup_{A_3}
A_2\to A_1\cup_{A_3} A_2$ can be identified with a map
\[ 
	A_1\cup_{A_3} A_2\to X\otimes I\cup_{A_3 \otimes I}A_3
\]
which is also an $\bar{f}$-cofibration.  Therefore, to prove that
$A_1\cup_{A_3} A_2\to X$ is an $\bar{f}$-cofibration it suffices to prove that
$A_1\cup_{A_3} A_2\to X$ is a retract of $A_1\cup_{A_3} A_2\to X\otimes
I\cup_{A_3\otimes I}A_3$.   Suppose $(u_1,h_1)$ and $(u_2,h_2)$ are
representations of $(X,A_1)$ and $(X,A_2)$ as fiberwise NDR pairs.  As in
\cite{Lillig} define a map $u\colon X\to X\otimes I\cup_{A_3\otimes I}A_3$ by
\[ 
	u(x) = \begin{cases} 
	[x,u_1(x)/(u_1(x) + u_2(x))] & \text{if}\ x\notin A_3, \\ 
	[x,0] & \text{if}\ x\in A_3.  
	\end{cases} 
\] 
Then it is easy to check that $u(x) = [x,0]$ if $x\in A_1$ and $u(x) = [x,1]$
if $x\in A_2$.  This map exhibits $A_1\cup_{A_3} A_2\to X$ as a retract, as
required.   
\end{proof}
    
We do not know if the Lillig condition holds more generally; the
proof we have given (which is a re-working of Lillig's original proof) uses
crucially the characterization of  $\bar{f}$-cofibrations in terms of
fiberwise NDR pairs.  We note that the result is false in general if
$\bar{f}$-cofibrations are replaced by $f$-cofibrations.

\end{appendices}


\begin{thebibliography}{BHM93}  

 \bibitem[AM69]{AM} 
 {\scshape Artin, M.; Mazur, B.} On the van Kampen theorem. {\sl Topology} {\bf 5} (1966) 179--189. 


\bibitem[B06]{Bartels}
{\scshape Bartels, T.} Higher gauge theory I: 2-bundles, UC Riverside PhD thesis, 2006, available as 
\href{http://arxiv.org/abs/math/0410328}{\texttt{arXiv:math/0410328}}

\bibitem[BR13]{Barthel-Riehl} 
{\scshape Barthel, T.; Riehl, E.} On the construction of functorial factorizations for model categories.  
{\sl Algebr.\ Geom.\ Topol.} {\bf 13} no.~2 (2013) 1089--1124.  

\bibitem[BHM93]{BokHsiMad}
{\scshape B\"{o}kstedt, M.; Hsiang, W.~C.; Madsen, I.}  The cyclotomic trace and algebraic K-theory of spaces.  
{\sl Invent.\  Math.} {\bf 111}, no.~3, (1993), 465--539. 

\bibitem[Br90]{Breen} 
{\scshape Breen, L.} Bitorseurs et cohomologie non-ab\'{e}lienne. 
{\sl The Grothendieck Festschrift, Vol.~I}, 401--476, Progr.\  Math., 
{\bf 86}, {\sl Birkh\"{a}user Boston, MA}, 1990. 



\bibitem[C06]{Cole}
{\scshape Cole, M.} Many homotopy categories are homotopy categories. 
{\sl Topology Appl.\ } {\bf 153}, no.~7 (2006) 1084--1099. 

\bibitem[CJ98]{Crabb-James} 
{\scshape Crabb, M.; James, I.} Fibrewise homotopy theory. Springer 
Monographs in Mathematics, {\sl Springer-Verlag London Ltd., London} 1998. 

 
  
\bibitem[D63]{Dold} {\scshape Dold, A.} Partitions of unity in the theory of 
fibrations. {\sl Ann.\ of Math.} (2) {\bf 78} (1963) 223--255. 
 

\bibitem[Du75]{Duskin Mem} 
{\scshape Duskin, J.} Simplicial methods and the interpretation of ``triple'' 
cohomology.  {\sl Mem.\  Amer.\  Math.\  Soc.},  {\bf 3}  issue 2, no.~163, (1975). 

\bibitem[EP00]{Ehlers-Porter} 
{\scshape Ehlers, P.~J.; Porter, T.} Joins for (augmented) simplicial sets. {\sl J.\  Pure Appl.\  Algebra} 
{\bf 145} no.~1, (2000), 37--44. 
 
\bibitem[GL82]{GaunceLewis}
{\scshape Gaunce Lewis Jr., L.} When is the natural map $X\to \Omega \Sigma X$ a 
cofibration? {\sl Trans.\ Amer.\ Math.\ Soc.} {\bf 273}, no.~1, (1982)  147--155. 


\bibitem[GJ99]{GJ} 
{\scshape Goerss P.; Jardine, J.~F.} Simplicial homotopy theory.  
Progress in Mathematics, {\bf 174} {\sl Birkh\"{a}user Verlag, Basel}, 1999. 


\bibitem[H99]{Hovey} 
{\scshape Hovey, M.} Model categories.  Mathematical 
Surveys and Monographs, {\bf 63}, {\sl American Mathematical 
Society, Providence, RI}, 1999. 


\bibitem[JL06]{JL}
{\scshape Jardine, J.~F.; Luo, Z.} Higher principal bundles.  
{\sl  Math.\ Proc.\ Cambridge Philos.\ Soc.}, {\bf 140}, no.~2, (2006)  221--243.  

\bibitem[J11]{Jurco1} 
{\scshape Jur\v{c}o, B.}  Crossed module bundle gerbes: classification, string group and differential geometry.  
{\sl Int.\ J.~Geom.\ Methods Mod.\ Phys.} {\bf 08}, no.~5, (2011)  1079--1095. 



 \bibitem[I72]{Illusie} 
 {\scshape Illusie, L.} Complexe cotangent et d\'{e}formations II.  
 Lect.\ Notes in Math., {\bf 283}, {\sl Springer-Verlag}, 1972.

\bibitem[K58]{Kan}
{\scshape Kan, D.~M.} On homotopy theory and css groups.  {\sl Ann.\ Math.}, {\bf 68}, no.~1 (1958), 38--53.  

\bibitem[L73]{Lillig}
{\scshape Lillig, J.} A union theorem for cofibrations. 
{\sl Arch.\ Math.\ (Basel)}  {\bf 24}   (1973), 410--415. 




\bibitem[M72]{May-GILS}
{\scshape May, J.~P.} Geometry of iterated loop spaces.  
Lect.\ Notes in Math., {\bf 271}, {\sl Springer-Verlag}, 1972. 

\bibitem[M75]{May-Classifying} 
{\scshape May, J.~P.} Classifying spaces and fibrations.  
{\sl Mem.\ Amer.\ Math.\ Soc.}, no.~155, (1975).  

\bibitem[MS06]{MaySig} 
{\scshape May, J.~P.; Sigurdsson, J.} Parametrized homotopy theory. Mathematical Surveys and 
Monographs  {\bf 132}, {\sl American Mathematical Society, Providence, RI}, 2006. 

\bibitem[M67]{May-SOAT} 
{\scshape May, J.~P.} Simplicial objects in algebraic topology.  
Van Nostrand Mathematical Studies,  no.~11, {\sl D.~Van Nostrand 
Co., Inc., Princeton, N.-J., Toronto, London} 1967. 


\bibitem[Mc69]{McCord} {\scshape McCord, M.~C.} Classifying 
spaces and infinite symmetric products.  
{\sl Trans.\ Amer.\ Math.\ Soc.}  {\bf 146} (1969), pp.~273--298. 


\bibitem[Mu96]{Murray} 
{\scshape Murray, M.~K.} Bundle gerbes.  {\sl J.~London 
Math.\ Soc.\ (2)}  {\bf 54}, no.~2,  (1996), 403--416. 

 \bibitem[R13]{Roberts}
 {\scshape Roberts, D.~M.} The universal simplicial bundle is a simplicial group. 
 {\sl New York J.\ Math.} {\bf 19} (2013), 51--60. 

 \bibitem[R10]{Roberts_PhD}
 {\scshape Roberts, D.~M.} Fundamental bigroupoids and 2-covering spaces, 
 PhD thesis, University of Adelaide (2010), available from \url{http://hdl.handle.net/2440/62680}.



 \bibitem[Sch11]{Schreiber_Habilitation}
 {\scshape Schreiber, U.} Differential cohomology in a cohesive 
 $\infty$-topos, Habilitation, Hamburg University, 2011, available from\\
 \url{http://ncatlab.org/schreiber/show/differential+cohomology+in+a+cohesive+topos}.

 
\bibitem[SV02]{SV} 
{\scshape Schw\"{a}nzl, R.; Vogt, R.~M.} Strong cofibrations and 
fibrations in enriched categories.  {\sl Arch.\ Math.} {\bf 79} (2002) 449--462. 
 
\bibitem[Se68]{Segal-IHES} 
{\scshape Segal, G.~B.} Classifying spaces and 
spectral sequences.  {\sl Inst. Hautes \'{E}tudes Sci.\ Publ.\ Math.}, No.~34 (1968) 105--112.  

\bibitem[Se74]{Segal-Cats} 
{\scshape Segal, G.~B.} Categories and cohomology theories.  
{\sl Topology}, {\bf 13} (1974), 293--312.   


\bibitem[St68]{Steenrod} 
{\scshape Steenrod, M.~E.} Milgram's classifying space of a 
topological group.  {\sl Topology}, {\bf 7} (1968) 349--368.  

\bibitem[S04]{Ste}
{\scshape Stevenson, D.} Bundle 2-gerbes.  {\sl Proc.\ 
London Math.\ Soc.\ } (3) {\bf 88}, no.~2, (2004) 405--435.  

\bibitem[S12a]{Ste1} 
{\scshape Stevenson, D.} D\'{e}calage and Kan's simplicial loop group functor.  
{\sl Theory Appl.\ Categ.} {\bf 26} (2012), 768--787. 
 
\bibitem[S12b]{Ste2} 
{\scshape Stevenson, D.} Classifying theory for parametrized simplicial groups.  Preprint, available as 
\href{http://arxiv.org/abs/1203.2461}{\texttt{arXiv:1203.2461}}

\bibitem[Str66]{Strom}
{\scshape Str\o m, A.} Note on cofibrations.  {\sl Math.\  Scand.} {\bf 19} (1966) 11--14.  

\bibitem[Str68]{Strom2} 
{\scshape Str\o m, A.} Note on cofibrations II.  {\sl Math.\ 
Scand.} {\bf 22} (1968) 130--142. 

\bibitem[Str72]{Strom3} 
{\scshape Str\o m, A.} The homotopy category is a 
homotopy category.  {\sl Arch.\ Math.\ (Basel)} {\bf 23} 435--441, 1972.  
 
\bibitem[V71]{Vogt}
{\scshape Vogt, R.~M.} Convenient categories of topological 
spaces for homotopy theory.  {\sl Arch.\ Math.\ } {\bf 22} 
(1971), 541--555. 
 
\bibitem[W11]{Wockel} 
{\scshape Wockel, C.}  Principal 2-bundles and their gauge 2-groups.  
{\sl Forum Math.} {\bf 23}, no.~3, (2011), 565--610.  
 
\end{thebibliography}
\end{document}